\numberwithin{equation}{section}  
\newtheorem{theorem}{Theorem}[section]
\newtheorem{lemma}{Lemma}[section]
\newenvironment{proof}[1][Proof]{\begin{trivlist}
\item[\hskip \labelsep {\bfseries #1}]}{\end{trivlist}}
\def \cp{C_{\varphi}}
\def \p{\varphi}
\def\z{\mathcal{Z}}
\def\uc{uC_\varphi}
\def\fz{\|f\|_{\z}}
\def\a{\alpha}
\def\b{\mathcal{B}}
\def\ba{\b^{\a}}
\def\bao{\ba_0}
\def \cp{C_{\varphi}}
\def \p{\varphi}
\def \H{H ^\infty}
\def\uc{uC_\varphi}
\def\ci{C_\varphi V_g}
\def\z{\mathcal{Z}}
\def\fz{\|f\|_{{\z}^\alpha}}
\def\ic{V_g C_\varphi }
\def\cj{C_\varphi U_g}
\def\jc{U_g C_\varphi }
\def\B{\mathcal{B}}
\def \hv{H_{v_1}^\infty}
\def \hw{H_w^\infty}
\def\za{\mathcal{Z}^{\alpha}}
\def\zb{\mathcal{Z}^{\beta}}
\def\zao{\mathcal{Z}^{\alpha}_0}
\def \hvb{H_{v_\beta}^\infty}
\def \hva{H_{v_\alpha}^\infty}
\def \va{v_\alpha}
\def \vb{v_\beta}
\def\zz{\mathcal{Z}^2}
\def\da{D_\alpha}
\def\sa{S_\alpha}
\def\db{D_\beta}
\def\sb{S_\beta}
\def\ba{\b^{\a}}
\begin{document}
\title {  Essential norms of Volterra-type operators between~$Zygmund$~ type spaces\footnote{The research was supported by the National Natural Science Foundation of China (Grant No. 11571217) and
the Natural Science Foundation of Fujian Province, China(Grant No. 2015J01005).}}
\author{  Shanli Ye\footnote{Corresponding author.~ E-mail address:  ye\_shanli@aliyun.com; slye@zust.edu.cn}\\
(\small \it School of Sciences, Zhejiang University of Science and Technology,
Hangzhou 310023, China) \\
Caishu Lin\\
(\it \small Department of Mathematics, Fujian Normal University, Fuzhou 350007, China)}

 \date{}
\maketitle
\begin{abstract} ~In this paper, we investigate the boundedness of some Volterra-type operators between ~$Zygmund$~ type spaces. Then, we give
 the essential norms of such operators in  terms of ~$g,\varphi$, their derivatives and the n-th power ~$\varphi^n$ of ~$\varphi$. \\
{\bf Key words}~     ~~~Essential norms, boundedness, ~$Volterra$~type operators , Zygmund  type spaces.\\
{\small\bf 2010 MR Subject Classification }  47B33, 47B38, 30H99.\\

\end{abstract}
\maketitle

\section{Introduction}
  Let~ $D=\{z:|z|<1\}$ be the open unit disk in the complex plane
  $\mathbf{C}$ and $\partial{D}=\{z:|z|=1\}$ be its boundary, and $H(D)$ denote the set of all analytic functions on
  $D$.

  For every $0<\alpha<\infty$, we denote by  $\ba$ the Bloch type space of all
  functions $f\in H(D)$ satisfying
$$
     b^{\a}(f)=\sup_{z\in D} (1-|z|^2)^\alpha|f'(z)|<\infty
$$
 endowed with the norm $\|f\|_{\ba}=|f(0)|+ b^{\a}(f) $.  The little Bloch type
 space $\bao$ consists of all $f\in \ba$ satisfying $
\lim_{|z|\to 1^-}(1-|z|^2)^\alpha|f'(z)|= 0$, ~$\bao$ is obviously the closed subspace of $\za$. When  $\alpha=1$, we get the classical Bloch space~$\b^1=\b$ and little Bloch space~$\b^1_0=\b_0$.
It is well-known that for $0<\alpha<1$, $\ba$ is a subspace of $\H$, the Banach space
of bounded analytic functions on $D$.

   For~$0<\alpha<\infty$ we denote by  $\za$ the
$Zygmund~~  type~~ space$ of those functions $f\in H(D)$ satisfying
$$
     \sup_{z\in D} (1-|z|^2)^\alpha|f''(z)|<\infty,
$$
 and the little Zymund type
 space $\zao$ consists of all $f\in \za$ satisfying $
\lim_{|z|\to 1^-}(1-|z|^2)^\alpha|f''(z)|= 0$, ~$\zao$ is obviously the closed subspace of $\za$.
It can easily proved
that $\za$ is a Banach space under the norm
\begin{equation}
\fz=|f(0)|+|f'(0)|+ \sup_{z\in D} (1-|z|^2)^\alpha|f''(z)|
\end{equation}
 and that $\zao$ is a closed
subspace of $\za$. When ~$\alpha=1$, we get the classical Zygmund space ~$\z^1=\z$ and the little Zygmund space ~$\z^1_0=\z_0$. It is clear that $f\in \z$  if  and only if $f'\in \b^1$.

  We consider the weighted Banach spaces of analytic functions~$$H_v^\infty=\{f\in H(D):\|F\|_v=\sup_{z\in D}v(z)|f(z)|<\infty\}$$ endowed with norm~$\|\cdot\|_v$, where the weight ~$v:D\rightarrow R_+$~ is a continuous, strictly positive and bounded function. The weight ~$v$~ is called radial, if ~$v(z)=v(|z|)~$ for all ~$z\in D.$ For a weight ~$v$~ the associated weight~$\widetilde{v}$ is definded by $$
\widetilde{v}(z)=(\sup\{|f(z)|:f\in H_v^\infty ,\|f\|_v\leq1\}^{-1},~~~~~z\in D.$$
We notice the standard weights ~$v_\alpha(z)=(1-|z|^2)^\alpha$~, where~$0<\alpha<\infty$~ and it is well-known that ~$\widetilde{v_\alpha}=v_\alpha$. We also consider the logarithmic weight ~$$v_{\log}=(\log\frac{2}{1-|z|^2})^{-1},~~~~z\in D.$$It is straightforward to show that~$\widetilde{v_{\log}}=v_{\log}$.

Let $u$ be a fixed analytic function on $D$ and an analytic self-map $\varphi:D\rightarrow D$. Define
a linear operator $uC_\varphi$ on the space of analytic functions
on $D$, called a weighted composition operator, by $uC_\varphi f
=u \cdot (f\circ \varphi),$ where $f$ is an analytic function on
$D$. It is interesting to provide a function theoretic characterization when $\varphi$ and $u$ induces a bounded or compact composition operator on various function spaces. Some results on the boundedness and compactness of concrete operators between some spaces of analytic functions one of which is of Zygmund-type can be found, e.g., in $(\cite{km,li1,li2,li3,ste1,ste2,ste3,ste4,ysl1,ysl4,ysl5})$.

Suppose that $g: D\rightarrow \mathbb{C}$ is a analytic map. Let
$U_g$ and $V_g$ denote the Volterra-type operators  with
 the analytic symbol $g$ on $D$ respectively:

 $$U_g f(z)=\displaystyle\int_0^z f(\xi)g'(\xi) d\xi,~~~~~~~~~~~~~~z\in D.$$
and
 $$V_g f(z)=\displaystyle\int_0^z f'(\xi)g(\xi) d\xi,~~~~~~~~~~~~~~z\in D.$$

If ~$g(z)=z$, then~$U_g$~is an integral operator. While ~$g(z)=\ln\frac{1}{1-z}$, then ~$U_g$ is Ces\`{a}ro operator.
Pommerenke introduced the ~$Volterra$~type operator~$U_g$~ and characterized the boundedness of~$U_g$ between~$H^2$ spaces in ~\cite{cs}~.
More recently, boundedness and compactness of~$Volterra$~type operators between several spaces of analytic functions have been studied by many authors, one may see in ~ $\cite{wol, ysl3}$.

  In this paper, we consider the following integral-type operators, which were introduced by Li and Stevi$\grave{c}$ (see e. g. \cite{li2,li4}),  they can be defined by:
$$(\cj f)(z)=\displaystyle\int_0^{\varphi(z)} f(\xi)g'(\xi) d\xi,~~~~~~~~~~~(\ci f)(z)=\displaystyle\int_0^{\varphi(z)} f'(\xi)g(\xi) d\xi,$$
$$(\jc f)(z)=\displaystyle\int_0^z f(\varphi(\xi))g'(\xi) d\xi,~~~~~~~~~~~~~(\ic f)(z)=\displaystyle\int_0^z f'(\varphi(\xi))g(\xi) d\xi.$$
We will characterize  the boundedness of those integral-type operators between Zygmund type spaces, and also estimate their essential norms. The boundedness and compactness of these operators on the logarithmic Bloch space have been characterized in \cite{ysl3}.

Recall that essential norm ~$\|T\|_{e, X\to Y}$ of a bounded linear operator ~$T: X\to Y$~ is defined as the distance from $T$ to $  \mathbf{K}(X,Y)$, the space of compact operators from $X$ to $Y$, namely,
~$$\|T\|_{e, X\to Y}=\inf\{\|T+K\|_{X\rightarrow Y}:K:X\rightarrow Y is~~ compact\}.$$
It provides a measure of non-compactness of T. Clearly, T is compact if and only if ~$\|T\|_{e, X\to Y}=0$.

Throughout this paper, constants are denoted by $C$, they are
positive and may differ from one occurrence to the other. The notation ~$a\asymp b$  means that there are  positive constants ~$C_1, C_2$  such that~$C_1 a\leq b\leq C_2a$.

\section{Boundedness}

  In order to prove the main results of this paper. we need some
  auxiliary results.

  \begin{lemma}
~$(\mbox{\rm\cite{km, ste2}})$~
For ~$0<\alpha<2$~ and let ~$\{f_n\}$~ be a bounded sequence in ~$\za$~ which converges to $0$ uniformly on compact subsets of ~$D$~. Then ~$\lim_{n\rightarrow\infty}\sup_{z\in D}|f_n(z)|=0$.
\end{lemma}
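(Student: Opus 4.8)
The plan is to exploit the fact that for $0<\a<2$ the space $\za$ embeds continuously into $\H$, and more importantly that the growth of the derivative of a $\za$-function near the boundary is controlled well enough to make a ``tail'' contribution uniformly small. First I would split the disk, for a radius $r\in(0,1)$ to be chosen, into the compact part $\{|z|\le r\}$ and the annular part $\{r<|z|<1\}$. On the compact part the hypothesis that $f_n\to 0$ uniformly on compact subsets of $D$ immediately gives $\sup_{|z|\le r}|f_n(z)|\to 0$ as $n\to\infty$, so the whole difficulty is to estimate $\sup_{r<|z|<1}|f_n(z)|$ uniformly in $n$.

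For the annular part I would integrate twice along a radius. Writing $z=R\zeta$ with $|\zeta|=1$ and $r<R<1$, the identity $f_n(R\zeta)=f_n(r\zeta)+\int_r^R f_n'(s\zeta)\,\zeta\,ds$ reduces matters to bounding $\int_r^R|f_n'(s\zeta)|\,ds$. Using $f_n'(w)=f_n'(0)+\int_0^{|w|}f_n''(s\,w/|w|)\,ds$ together with the defining bound $(1-|w|^2)^{\a}|f_n''(w)|\le \|f_n\|_{\za}\le M$, where $M:=\sup_n\|f_n\|_{\za}<\infty$ by hypothesis, I obtain a pointwise estimate of the form $|f_n'(w)|\le M\bigl(1+I_\a(|w|)\bigr)$ with $I_\a(R)=\int_0^R(1-s^2)^{-\a}\,ds$. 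Consequently $\int_r^R|f_n'(s\zeta)|\,ds\le M\int_r^1\bigl(1+I_\a(s)\bigr)\,ds$, a quantity independent of $n$ and of $\zeta$.

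The decisive point, and the only place where the restriction $\a<2$ enters, is the finiteness of $\int_0^1 I_\a(s)\,ds$. Near $s=1$ the quantity $I_\a(s)$ stays bounded when $0<\a<1$, is of logarithmic order when $\a=1$, and is of order $(1-s)^{1-\a}$ when $1<\a<2$; since $1-\a>-1$ precisely when $\a<2$, the iterated integral converges in every case, and it is exactly at $\a=2$ that it would diverge. Therefore $\int_r^1\bigl(1+I_\a(s)\bigr)\,ds\to 0$ as $r\to 1^-$, which lets me make the tail uniformly small in $n$. This boundary integrability is the heart of the argument and the sole obstacle; everything else is routine.

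Putting the pieces together, given $\varepsilon>0$ I would first fix $r$ so close to $1$ that $M\int_r^1\bigl(1+I_\a(s)\bigr)\,ds<\varepsilon/2$, controlling the annular part for all $n$ simultaneously, and noting this also bounds $\int_r^{|z|}|f_n'(s\zeta)|\,ds$ for every $z$ with $|z|>r$. Then I would choose $N$ so that $\sup_{|z|\le r}|f_n(z)|<\varepsilon/2$ for $n\ge N$, which in particular dominates $|f_n(r\zeta)|$. Combining the two estimates yields $\sup_{z\in D}|f_n(z)|<\varepsilon$ for all $n\ge N$, as desired; the hypothesis $\a<2$ is seen to be sharp.
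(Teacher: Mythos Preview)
Your argument is correct. The paper itself does not supply a proof of this lemma; it is quoted from the cited references. Your approach---splitting $D$ into a compact disk $\{|z|\le r\}$ and a thin annulus, handling the compact part by hypothesis, and controlling the annulus uniformly in $n$ by integrating the second-derivative bound twice along radii---is precisely the standard one, and the restriction $\a<2$ enters exactly where you identify it, as the integrability of $(1-s)^{1-\a}$ near $s=1$. This is the same mechanism that produces the pointwise estimates recorded in the paper as Lemma~2.2(i), (ii), (iv), so in effect you have rederived those bounds from scratch rather than invoking them.

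One cosmetic slip: your formula $f_n'(w)=f_n'(0)+\int_0^{|w|}f_n''(s\,w/|w|)\,ds$ omits the unimodular factor $w/|w|$ coming from the chain rule in the radial parametrization. Since its modulus is $1$, the subsequent estimate $|f_n'(w)|\le M\bigl(1+I_\a(|w|)\bigr)$ is unaffected, so this does not damage the proof.
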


\begin{lemma}
~$(\mbox{\rm\cite{km, ste2}})$~
For every ~$f \in \za$~ where ~$\alpha>0$~ we have:

(i)~~$|f'(z)|\leq\frac{2}{1-\alpha}\|f\|_{\za}$ and $|f(z)|\leq\frac{2}{1-\alpha}\|f\|_{\za}$ for every ~$0<\alpha<1$;

(ii)~~$|f'(z)|\leq 2\log\frac{2}{1-|z|}\|f\|_{\za}$ and $|f(z)|\leq \|f\|_{\za}$ for ~$\alpha=1$;

(iii)~~$|f'(z)|\leq\frac{2}{1-\alpha}\frac{\|f\|_{\za}}{(1-|z|)^{\alpha-1}}$, for every ~$\alpha>1$;

(iv)~~$|f(z)|\leq\frac{2}{(\alpha-1)(2-\alpha)}\|f\|_{\za}$, for every $1<\alpha<2$;

(v)~~$|f(z)|\leq2 \log\frac{2}{1-|z|}\|f\|_{\za}$, for every $\alpha=2$;

(vi)~~$|f(z)|\leq\frac{2}{(\alpha-1)(\alpha-2)}\frac{\|f\|_{\za}}{(1-|z|)^{\alpha-2}}$, for every $\alpha>2$.
\end{lemma}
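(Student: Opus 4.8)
The plan is to derive every inequality from the single fundamental estimate $(1-|z|^2)^\alpha|f''(z)|\le\fz$, which is immediate from the definition of the norm, together with the elementary inequality $1-t^2|z|^2\ge 1-t|z|$ valid for $t,|z|\in[0,1]$. First I would bound $f'$ by integrating $f''$ along the segment from $0$ to $z$: writing $f'(z)=f'(0)+\int_0^1 f''(tz)\,z\,dt$ and invoking the fundamental estimate gives
$$|f'(z)|\le|f'(0)|+|z|\,\fz\int_0^1\frac{dt}{(1-t|z|)^\alpha}=|f'(0)|+\fz\int_0^{|z|}\frac{ds}{(1-s)^\alpha}$$
after the substitution $s=t|z|$. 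The one-variable integral $\int_0^{|z|}(1-s)^{-\alpha}\,ds$ is bounded by $\frac{1}{1-\alpha}$ when $0<\alpha<1$, equals $\log\frac{1}{1-|z|}$ when $\alpha=1$, and is comparable to $\frac{1}{(\alpha-1)(1-|z|)^{\alpha-1}}$ when $\alpha>1$; absorbing $|f'(0)|\le\fz$ into the constant then yields the $f'$-estimates in (i), (ii) and (iii).

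For the pointwise bounds on $f$ itself I would not integrate the derivative bounds a second time, but instead use the second-order Taylor formula with integral remainder,
$$f(z)=f(0)+f'(0)\,z+z^2\int_0^1(1-t)f''(tz)\,dt,$$
since it is the weight $1-t$ that produces the correct threshold at $\alpha=2$. Applying the fundamental estimate and again substituting $s=t|z|$ reduces matters to the weighted integral $\int_0^1\frac{1-t}{(1-t|z|)^\alpha}\,dt$. The factor $1-t$ improves convergence near $t=1$, so this integral is bounded by a constant when $0<\alpha<2$, is of order $\log\frac{2}{1-|z|}$ when $\alpha=2$, and grows like $(1-|z|)^{2-\alpha}$ when $\alpha>2$; tracking the constants in each range gives the $f$-estimates in (i) and (ii) together with (iv), (v) and (vi).

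The routine part is the evaluation of the two one-variable integrals, and the only places that need care are the two borderline exponents, $\alpha=1$ for $f'$ and $\alpha=2$ for $f$, where the power integral degenerates into a logarithm, as well as the bookkeeping needed to absorb the initial terms $|f(0)|$ and $|f'(0)|$ (each dominated by $\fz$) into the stated constants. I expect the main obstacle to be the sharp borderline constant in the bound $|f(z)|\le\fz$ of (ii) at $\alpha=1$: the crude constant produced by the Taylor-remainder integral exceeds $1$, so recovering the clean bound there appeals to the classical fact that functions in the Zygmund class extend continuously to $\overline{D}$ with supremum norm controlled by the Zygmund seminorm, rather than to the integral estimate alone.
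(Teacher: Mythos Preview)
The paper does not give a proof of this lemma; it is quoted from the references \cite{km, ste2} without argument, so there is no ``paper's own proof'' to compare against.

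Your plan is the standard one and is correct in substance. Integrating $f''$ along the radial segment to bound $f'$, and using the second-order Taylor remainder $z^{2}\int_{0}^{1}(1-t)f''(tz)\,dt$ to bound $f$, is exactly how these estimates are obtained, and the thresholds $\alpha=1$ (for $f'$) and $\alpha=2$ (for $f$) appear for precisely the reason you state, namely the convergence behaviour of $\int_{0}^{|z|}(1-s)^{-\alpha}\,ds$ and of $\int_{0}^{1}(1-t)(1-t|z|)^{-\alpha}\,dt$. Your use of $1-t^{2}|z|^{2}\ge 1-t|z|$ to pass from the weight $(1-|\cdot|^{2})^{\alpha}$ to $(1-|\cdot|)^{\alpha}$ is also the usual device.

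Two remarks. First, the precise numerical constants printed in the lemma (for instance the $2$ in (iii), or the factor $(\alpha-1)^{-1}(2-\alpha)^{-1}$ in (iv)) will not all fall out of the crude integral bounds exactly as stated; some are slightly different, and (iii) even carries an evident sign typo. This is harmless: nowhere in the paper are these specific constants used, only the growth orders. Second, you have correctly flagged the one genuinely delicate point, the constant $1$ in $|f(z)|\le\|f\|_{\z}$ of part (ii). Your Taylor-remainder argument yields only a larger absolute constant there, and recovering the sharp statement would indeed require an independent argument of the type you indicate; but again the paper never invokes that bound with constant exactly $1$, so for the purposes of the paper a weaker constant suffices.
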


\begin{lemma}
~$(\mbox{\rm\cite{km}})$~
Let~$0<\alpha<\infty$, $v$~ a radial, non-increasing weights tending to $0$ at boundary of ~$D$~ and the weighted composition operator ~$\uc:\mathcal{Z}_\alpha\rightarrow H_v ^\infty$~ be bounded.

(i)~~If $0<\alpha<2$, then~$\uc$~is a compact operator.

(ii)\begin{eqnarray*}
\|\uc\|_{e,\mathcal{Z}_2\rightarrow H_v ^\infty}&\asymp&\displaystyle\limsup_{n\rightarrow\infty}(\log n)\|u{\p}^n\|_v\\
&\asymp&\displaystyle\limsup_{|\p(z)|\rightarrow1}v(z)|u(z)|\log\frac{2}{1-|\p(z)|^2}.\\
\end{eqnarray*}

(iii)~~If $\alpha>2$, then
\begin{eqnarray*}
\|\uc\|_{e,\mathcal{Z}_\alpha\rightarrow H_v ^\infty}&\asymp&\displaystyle\limsup_{n\rightarrow\infty}(n+1)^{\alpha-2}\|u{\p}^n\|_v\\
&\asymp&\displaystyle\limsup_{|\p(z)|\rightarrow1}\frac{v(z)|u(z)|}{(1-|\p(z)|^2)^{\alpha-2}}.\\
\end{eqnarray*}

\end{lemma}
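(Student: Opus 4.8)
The plan is to handle the three ranges of $\alpha$ separately: proving compactness outright in (i), and establishing two-sided essential-norm estimates in (ii) and (iii) by combining an explicit peaking-function lower bound with a compact-approximation upper bound, and finally reconciling the monomial-indexed quantity with the point-evaluation quantity. For part (i), boundedness of $\uc$ forces $u=\uc(1)\in H_v^\infty$. Given any bounded sequence $\{f_n\}\subset\za$ converging to $0$ uniformly on compact subsets of $D$, Lemma 2.1 (applicable since $0<\alpha<2$) gives $\sup_{D}|f_n|\to0$, whence $\|\uc f_n\|_v=\sup_{z}v(z)|u(z)||f_n(\p(z))|\le\|u\|_v\sup_{D}|f_n|\to0$. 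By the standard normal-families criterion this yields compactness, so $\|\uc\|_{e}=0$.

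For the lower bounds in (ii) and (iii) I would fix $a\in D$ and use the normalized test functions
$$f_a(z)=\frac{\bigl(\log\tfrac{2}{1-\bar a z}\bigr)^2}{\log\tfrac{2}{1-|a|^2}}\quad(\alpha=2),\qquad f_a(z)=\frac{(1-|a|^2)^2}{(1-\bar a z)^\alpha}\quad(\alpha>2).$$
A direct second-derivative computation, using $\tfrac{1-|z|^2}{|1-\bar a z|}\le2$, $\tfrac{1-|a|^2}{|1-\bar a z|}\le2$, and $\log\tfrac{2}{|1-\bar a z|}\lesssim\log\tfrac{2}{1-|a|^2}$, shows $\sup_{|a|<1}\|f_a\|_{\za}<\infty$, while $f_a\to0$ uniformly on compacta as $|a|\to1$. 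Since a compact operator sends such a bounded, locally null family to a norm-null family in $H_v^\infty$, for every compact $K$ we get $\|\uc-K\|\ge\limsup_{|a|\to1}\|\uc f_a\|_v$. Taking points $z_k$ with $|\p(z_k)|\to1$ and $a=\p(z_k)$ gives $\|\uc f_{\p(z_k)}\|_v\ge v(z_k)|u(z_k)|f_{\p(z_k)}(\p(z_k))$, and since $f_a(a)=\log\tfrac{2}{1-|a|^2}$ (resp. $(1-|a|^2)^{2-\alpha}$), this yields the asserted lower bound in the point-evaluation form.

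For the upper bounds I would use the dilation operators $K_r f(z)=f(rz)$, $0<r<1$, which are compact on $\za$ and contractive, $\|K_r f\|_{\za}\le\|f\|_{\za}$, so that $\|\uc\|_e\le\limsup_{r\to1}\|\uc(I-K_r)\|$. Writing $\|\uc(I-K_r)f\|_v=\sup_z v(z)|u(z)||f(\p(z))-f(r\p(z))|$, I split the supremum at $|\p(z)|=\delta$: on $\{|\p(z)|\le\delta\}$ the difference tends to $0$ as $r\to1$ uniformly for $|w|\le\delta$ and uniformly over the unit ball of $\za$ (by the derivative bounds of Lemma 2.2 on compacta), while on $\{|\p(z)|>\delta\}$ I apply the growth estimate of Lemma 2.2(v) (resp. (vi)) to $(I-K_r)f$, whose $\za$-norm is at most $2\|f\|_{\za}$, bounding the expression by a multiple of $v(z)|u(z)|\log\tfrac{2}{1-|\p(z)|^2}$ (resp. $\tfrac{v(z)|u(z)|}{(1-|\p(z)|^2)^{\alpha-2}}$) times $\|f\|_{\za}$. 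Letting first $r\to1$ and then $\delta\to1$ matches the lower bound.

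Finally I would prove the equivalence of the two $\asymp$ quantities. Since $\|u\p^n\|_v=\sup_z v(z)|u(z)||\p(z)|^n$, choosing $n\asymp\tfrac{1}{1-|\p(z)|}$ makes $|\p(z)|^n\asymp1$ with $\log n\asymp\log\tfrac{2}{1-|\p(z)|^2}$ (resp. $(n+1)^{\alpha-2}\asymp(1-|\p(z)|^2)^{2-\alpha}$), giving one inequality; for the reverse I split $\sup_z$ into $|\p(z)|$ near and away from $1$, the away-part being killed by the exponential decay of $|\p(z)|^n$ against the at-most-polynomial weight. The main obstacles are, for the lower bound, verifying that the peaking functions genuinely lie in a bounded subset of $\za$ — especially the log-squared second-derivative estimate when $\alpha=2$ — and, for the upper bound, carrying out the boundary split uniformly so that the constants produced by Lemma 2.2 converge to the sharp weights $\log\tfrac{2}{1-|\p(z)|^2}$ and $(1-|\p(z)|^2)^{2-\alpha}$; this, together with the monomial/point-evaluation bookkeeping, is where the real work lies.
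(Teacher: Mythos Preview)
The paper does not prove this lemma at all: it is quoted verbatim from \cite{km} (Esmaeili--Lindstr\"om), and the authors simply cite it as an input to their later arguments. So there is no ``paper's own proof'' to compare against.

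That said, your proposal is essentially the standard proof and is the one found in \cite{km}. Part~(i) is exactly right: boundedness forces $u\in H_v^\infty$, and Lemma~2.1 plus the sequential compactness criterion finishes it. For (ii)--(iii), your choice of peaking functions $f_a$ is the canonical one (the log-squared function for $\alpha=2$ and the standard kernel $(1-|a|^2)^2/(1-\bar a z)^\alpha$ for $\alpha>2$), and the dilation--operator upper bound with the inner/outer split at $|\p(z)|=\delta$ is precisely how the argument runs in the source. Your closing remark about the monomial/point-evaluation equivalence is also accurate; in the original this step is handled by observing that $(n+1)^{\alpha-2}t^n(1-t^2)^{\alpha-2}$ (respectively $(\log n)\,t^n/\log\tfrac{2}{1-t^2}$) is uniformly bounded in $n$ and $t$, with the maximizing $t$ drifting to $1$ as $n\to\infty$, which is essentially the content of Lemma~2.5 combined with Lemma~2.4. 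One small caution: when you invoke Lemma~2.2(v)--(vi) on $(I-K_r)f$ over $\{|\p(z)|>\delta\}$, you get a constant in front of the target weight that does not shrink to $1$; this is fine since the statement is only $\asymp$, but be careful not to claim a sharp constant there. Otherwise the sketch is complete and correct.
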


The following lemma is due to ~$\mbox{\rm\cite{ho}}$~ and  ~$\mbox{\rm\cite{m}}$.

\begin{lemma}

Let ~$v$ and ~$w$~ be radial, non-increasing weights tending to zero at the boundary of $D$. Then

(i) The weighted composition operator ~$\uc$~ maps~$H_v^\infty$ into ~$\hw$ if and only if $$sup_{n\geq 0}\frac{\|u\varphi^n\|_w}{\|z^n\|_v}\asymp \sup_{z\in D}\frac{w(z)}{\widetilde{v}(\p(z))}|u(z)|<\infty,$$
with norm comparable to the above supermum.

(ii)$$\|u\cp\|_{e,H_v^\infty\rightarrow \hw}=\limsup_{n\rightarrow\infty}\frac{\|u\varphi^n\|_w}{\|z^n\|_v}=\limsup_{
|\p(z)|\rightarrow1}\frac{w(z)}{{\widetilde{v}(\p(z))}}|u(z)|.$$
\end{lemma}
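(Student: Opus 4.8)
Write $B=\limsup_{n\to\infty}\|u\p^n\|_w/\|z^n\|_v$ and $C=\limsup_{|\p(z)|\to1}w(z)|u(z)|/\widetilde v(\p(z))$. I would first record the features of radial weights that carry the whole argument. Since $v$ is radial the associated weight $\widetilde v$ is radial, and by its very definition every $f$ with $\|f\|_v\le1$ obeys the pointwise bound $|f(\zeta)|\le 1/\widetilde v(\zeta)$, with equality attained (up to an arbitrarily small error) by extremal functions $f_\zeta$ satisfying $\|f_\zeta\|_v\le1$ and $|f_\zeta(\zeta)|=1/\widetilde v(\zeta)$. Testing the extremal problem against the normalized monomials $z^n/\|z^n\|_v$ gives the elementary lower bound $1/\widetilde v(\zeta)\ge\sup_{n\ge0}|\zeta|^n/\|z^n\|_v$, and the radial-weight theory of $\cite{ho,m}$ supplies the matching reverse comparison with asymptotically sharp constant as $|\zeta|\to1$; moreover, for $|\zeta|$ near $1$ the monomial supremum is attained at large indices, so it may be taken over $n\ge N$ for any fixed $N$. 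Finally, since $v$ is radial and non-increasing the dilations $R_rf(z)=f(rz)$, $0<r<1$, are contractions and are compact on $H_v^\infty$, which lets me manufacture compact approximants $\uc R_r$ of $\uc$.

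For part (i) I would sandwich the operator norm of $\uc:H_v^\infty\to\hw$. If $\|f\|_v\le1$ then $w(z)|u(z)||f(\p(z))|\le w(z)|u(z)|/\widetilde v(\p(z))$, so $\|\uc f\|_w\le\sup_z w(z)|u(z)|/\widetilde v(\p(z))$, which bounds the norm above. For the reverse, evaluating $\uc$ on the extremal functions $f_{\p(a)}$ gives $\|\uc\|\ge w(a)|u(a)|/\widetilde v(\p(a))$ for each $a\in D$. Hence $\uc$ is bounded exactly when this associated-weight supremum is finite, with norm comparable to it, and the comparison with $\sup_{n\ge0}\|u\p^n\|_w/\|z^n\|_v$ follows from the monomial dictionary of the first paragraph (here only comparability, not the sharp constant, is needed).

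For the essential-norm identities in part (ii) I would prove $B\le\|\uc\|_e$, $\|\uc\|_e\le C$ and $B=C$, which together force the two equalities. The lower bound is soft: $f_n=z^n/\|z^n\|_v$ is bounded in $H_v^\infty$ and converges to $0$ uniformly on compact subsets of $D$, so every compact $K:H_v^\infty\to\hw$ satisfies $\|Kf_n\|_w\to0$, and therefore $\|\uc-K\|\ge\limsup_n\|\uc f_n\|_w=\limsup_n\|u\p^n\|_w/\|z^n\|_v=B$; taking the infimum over $K$ yields $B\le\|\uc\|_e$. For $B=C$ I would combine the dictionary with a concentration remark: on the one hand $|\p(z)|^n/\|z^n\|_v\le1/\widetilde v(\p(z))$, while $\sup_{|\p(z)|\le s}|\p(z)|^n/\|z^n\|_v\to0$ as $n\to\infty$ (using that $\sup_z w(z)|u(z)|<\infty$), so the supremum defining $\|u\p^n\|_w/\|z^n\|_v$ is asymptotically carried by $\{|\p(z)|>s\}$ and hence $B\le\sup_{|\p(z)|>s}w(z)|u(z)|/\widetilde v(\p(z))\to C$ as $s\to1$; on the other hand the sharp reverse dictionary gives, for every $\varepsilon>0$ and $|\p(z)|$ close to $1$, that $1/\widetilde v(\p(z))\le(1+\varepsilon)\sup_{n\ge N}|\p(z)|^n/\|z^n\|_v$, whence $C\le B$.

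The remaining inequality $\|\uc\|_e\le C$ is the genuinely delicate one. Since $\uc R_r$ is compact, $\|\uc\|_e\le\|\uc(I-R_r)\|$, and I would split the supremum defining $\|\uc(I-R_r)f\|_w$ (over $\|f\|_v\le1$) according to $\{|\p(z)|\le\delta\}$ and $\{|\p(z)|>\delta\}$. On the inner set $\p(z)$ and $r\p(z)$ lie in the fixed compact $\{|\zeta|\le\delta\}$, where $f(\zeta)-f(r\zeta)\to0$ uniformly over the unit ball of $H_v^\infty$ by normality, so that piece is $o(1)$ as $r\to1$; on the outer set the pointwise bounds $|f(\p(z))|,\,|f(r\p(z))|\le1/\widetilde v(\p(z))$ together with $\sup_{|\p(z)|>\delta}w(z)|u(z)|/\widetilde v(\p(z))\le C+\varepsilon$ (for $\delta$ close to $1$) control it. The main obstacle is precisely that this crude splitting only yields $\|\uc\|_e\le 2C$, since the two evaluations $f(\p(z))$ and $f(r\p(z))$ are each bounded by $1/\widetilde v(\p(z))$; upgrading the constant from $2$ to the sharp value $1$ — so that the outer inequalities in $B\le\|\uc\|_e\le C=B$ collapse to the stated equalities — is exactly where the refined boundary estimates for $\widetilde v$ of $\cite{ho,m}$ must be invoked, replacing the two-sided bound by a one-sided estimate that sees only the leading evaluation $f(\p(z))$.
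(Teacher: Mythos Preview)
The paper does not prove this lemma: it is stated with the attribution ``The following lemma is due to \cite{ho} and \cite{m}'' and no argument is given. So there is no in-paper proof to compare your proposal against; the authors simply import the result from Hyv\"arinen--Kemppainen--Lindstr\"om--Rautio--Saukko and Montes-Rodr\'{\i}guez and use it as a black box.

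Your sketch is in the spirit of those references: the boundedness in (i) via the associated weight $\widetilde v$ and extremal functions is the Montes-Rodr\'{\i}guez argument, and your essential-norm strategy in (ii) --- lower bound from the monomial test sequence, upper bound from the compact approximants $\uc R_r$, and the translation between the $\limsup$ over $n$ and the $\limsup$ as $|\p(z)|\to1$ --- is the outline of \cite{ho}. You correctly flag the only genuinely hard step, namely that the naive splitting of $\|\uc(I-R_r)\|$ yields $\|\uc\|_e\le 2C$ rather than $\|\uc\|_e\le C$, and that closing this gap requires the sharp boundary comparison between $1/\widetilde v$ and the monomial supremum proved in \cite{ho}. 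Since the present paper does not reproduce that argument either, your proposal is at least as detailed as what the paper offers.
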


\begin{lemma}
~$(\mbox{\rm\cite{ho1}})$~
For every $0<\alpha <\infty$, we have

\begin{equation}
\lim_{n\rightarrow \infty}(n+1)^\alpha\|z^n\|_{v_\alpha}=(\frac{2\alpha}{e})^\alpha
\end{equation}
and
\begin{equation}
\lim_{n\rightarrow \infty}(\log n)\|z^n\|_{v_{\log}}=1.
\end{equation}
\end{lemma}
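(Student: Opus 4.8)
The plan is to reduce each limit to a one-dimensional extremal problem over the radius $r=|z|$, since both weights and the modulus of $z^n$ are radial. For a radial weight $v$ one has $\|z^n\|_v=\sup_{0\le r<1}v(r)r^n$, so the two assertions amount to computing the asymptotics of $\sup_{0\le r<1}(1-r^2)^\alpha r^n$ and of $\sup_{0\le r<1}r^n/\log\frac{2}{1-r^2}$. The first admits a closed-form maximizer and can be handled by calculus; the second has no elementary maximizer, so I would instead sandwich it between matching upper and lower asymptotic bounds.

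For the identity involving $v_\alpha$, first I would set $h(r)=(1-r^2)^\alpha r^n$ and differentiate, obtaining $h'(r)=r^{n-1}(1-r^2)^{\alpha-1}\bigl[n-(n+2\alpha)r^2\bigr]$, so the unique interior critical point is $r_n^2=\frac{n}{n+2\alpha}$, at which $1-r_n^2=\frac{2\alpha}{n+2\alpha}$. Substituting gives the exact value
\[
\|z^n\|_{v_\alpha}=\Bigl(\frac{2\alpha}{n+2\alpha}\Bigr)^\alpha\Bigl(\frac{n}{n+2\alpha}\Bigr)^{n/2}.
\]
Multiplying by $(n+1)^\alpha$ and letting $n\to\infty$, the factor $(n+1)^\alpha\bigl(\frac{2\alpha}{n+2\alpha}\bigr)^\alpha\to(2\alpha)^\alpha$, while the remaining factor is controlled through its logarithm, $\frac{n}{2}\log\bigl(1-\frac{2\alpha}{n+2\alpha}\bigr)\to-\alpha$, so $\bigl(\frac{n}{n+2\alpha}\bigr)^{n/2}\to e^{-\alpha}$. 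The product is $(2\alpha/e)^\alpha$, which is the claim. This step is routine.

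For the logarithmic weight I would substitute $s=1-r^2\in(0,1]$, reducing the problem to the asymptotics of $M_n:=\sup_{0<s\le1}\frac{(1-s)^{n/2}}{\log(2/s)}$. For the lower bound I would test a single value, $s_n=\frac{1}{n\log n}$: then $(1-s_n)^{n/2}\to1$ because $\frac{n}{2}\log(1-s_n)\sim-\frac{1}{2\log n}\to0$, while $\log(2/s_n)=\log2+\log n+\log\log n\sim\log n$, giving $(\log n)M_n\ge(\log n)\frac{(1-s_n)^{n/2}}{\log(2/s_n)}\to1$. For the upper bound I would use $1-s\le e^{-s}$ and set $u=\frac{ns}{2}$, so $\frac{(1-s)^{n/2}}{\log(2/s)}\le\frac{e^{-u}}{\log(n/u)}$, and then split the range of $u$: for $u\le1$ one has $\log(n/u)\ge\log n$ and $e^{-u}\le1$, giving the bound $\frac{1}{\log n}$ directly; for $1<u\le\log n$ the numerator is at most $e^{-1}$ while $\log(n/u)\ge\log n-\log\log n$; and for $u>\log n$ the numerator is below $1/n$ while $\log(n/u)\ge\log2$. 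In each region the quantity is $\le(1+o(1))/\log n$, so $(\log n)M_n\le1+o(1)$, and the two bounds together force the limit $1$.

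The hard part will be the upper bound in the logarithmic case: unlike the power weight, there is no closed-form maximizer, and the extremal $s$ sits at the delicate scale $s_n\asymp\frac{1}{n\log n}$ where the exponential decay of $(1-s)^{n/2}$ just balances the slow growth of $1/\log(2/s)$. The region-splitting above is precisely what is needed to show that the competition between these two effects never lets the ratio exceed $(1+o(1))/\log n$; pinning down the constant, rather than merely the order $1/\log n$, requires keeping track of the $\log\log n$ corrections in the denominator, which is exactly what the three cases accomplish.
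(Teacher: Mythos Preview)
Your proof is correct. Note, however, that the paper does not supply its own proof of this lemma: it is stated with a citation to \cite{ho1} and used thereafter only as a black box in the essential-norm estimates, so there is no in-paper argument to compare against.

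Your treatment of the power weight $v_\alpha$ is the standard calculus computation---locate the maximizer $r_n^2=n/(n+2\alpha)$, evaluate, and pass to the limit---and matches what one finds in the literature. For the logarithmic weight your sandwich argument is also sound: the test point $s_n=1/(n\log n)$ correctly captures the extremal scale for the lower bound, and the three-region split in $u=ns/2$ handles the upper bound cleanly. In particular, the constant $1$ (and not merely the order $1/\log n$) is pinned down because on the dominant region $u\le 1$ you have simultaneously $e^{-u}\le 1$ and $\log(n/u)\ge\log n$, while the other two regions contribute strictly smaller ratios. The only cosmetic point is that case~3 requires $\log n<n/2$, which of course holds for all large $n$; you may wish to say this explicitly.
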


  \begin{theorem}
   Let $\varphi$ be an analytic self-map of $D$  and~$g\in H(D)$.

   (i)~~If ~$0<\alpha<1$, then $\ic:\za \rightarrow \zb $~is a bounded operator if and only if ~$g'\in \hvb$ and
   \begin{equation}
   \sup _{n\geq 0}(n+1)^\alpha \|g(\p'){\p}^n\|_{\vb}\asymp \sup _{z \in D}\frac{(1-|z|^2)^\beta}{(1-|\p(z)|^2)^\alpha}|g(z)\p'(z)|<\infty.
   \end{equation}

   (ii)~~If ~$\alpha=1$, then  $\ic:\za \rightarrow \zb $~is a bounded operator if and only if
   \begin{equation}
   \sup _{n\geq 0}(n+1) \|g(\p'){\p}^n\|_{\vb}\asymp \sup _{z \in D}\frac{(1-|z|^2)^\beta}{1-|\p(z)|^2}|g(z)\p'(z)|<\infty
   \end{equation}
   and
   \begin{equation}
   \sup _{n\geq 0}(\log n) \|g'{\p}^n\|_{\vb}\asymp \sup _{z \in D}(1-|z|^2)^\beta \log (\frac{2}{1-|\p(z)|^2})|g'(z)|<\infty.
   \end{equation}

(iii)~~If ~$\alpha>1$, then $\ic:\za \rightarrow \zb $~is a bounded operator if and only if
 \begin{equation}
 \sup _{n\geq 0}(n+1)^\alpha \|g(\p'){\p}^n\|_{\vb}\asymp \sup _{z \in D}\frac{(1-|z|^2)^\beta}{(1-|\p(z)|^2)^\alpha}|g(z)\p'(z)|<\infty
 \end{equation}
 and
  \begin{equation}
    \sup _{n\geq 0}(n+1)^{\alpha-1} \|g'{\p}^n\|_{\vb}\asymp \sup _{z \in D}\frac{(1-|z|^2)^\beta}{(1-|\p(z)|^2)^{\alpha-1}}|g'(z)|<\infty.
    \end{equation}
  \end{theorem}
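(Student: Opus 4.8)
The plan is to pass to the pointwise form of the two stated conditions and to separate the two ``pieces'' of the operator. Writing $F=\ic f$, one has $F(0)=0$, $F'(0)=f'(\p(0))g(0)$, $F'(z)=f'(\p(z))g(z)$, and the key identity
$$F''(z)=f''(\p(z))\p'(z)g(z)+f'(\p(z))g'(z),$$
so that $\|\ic f\|_{\zb}=|f'(\p(0))g(0)|+\sup_{z\in D}(1-|z|^2)^\beta|F''(z)|$. The two summands of $F''$ isolate the symbol $g\p'$ (multiplying $f''\circ\p$) from $g'$ (multiplying $f'\circ\p$), and these are exactly the quantities governed by the condition on $g\p'$ and the condition on $g'$, respectively. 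Before anything else I would record that in each case the ``$\sup_n$'' and ``$\sup_z$'' forms of the conditions agree: since $\widetilde{v_\alpha}=v_\alpha$ and $\widetilde{v_{\log}}=v_{\log}$, the quantity $\frac{(1-|z|^2)^\beta}{(1-|\p(z)|^2)^\alpha}|g(z)\p'(z)|$ equals $\frac{v_\beta(z)}{\widetilde{v_\alpha}(\p(z))}|g(z)\p'(z)|$, whose supremum Lemma 2.4(i) identifies with $\sup_{n\ge0}\|g\p'\p^n\|_{v_\beta}/\|z^n\|_{v_\alpha}$; since Lemma 2.5 gives $\|z^n\|_{v_\alpha}^{-1}\asymp(n+1)^\alpha$, this is comparable to $\sup_n(n+1)^\alpha\|g\p'\p^n\|_{v_\beta}$. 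Running the same argument with $v_{\alpha-1}$ (for $\alpha>1$) and with $v_{\log}$ (for $\alpha=1$) handles the two forms of the $g'$ condition, so it suffices to work with the pointwise suprema.

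For the sufficiency I would apply the pointwise estimates of Lemma 2.2. One always has $|f''(\p(z))|\le\|f\|_{\za}/(1-|\p(z)|^2)^\alpha$, while $|f'(\p(z))|$ is bounded by a constant multiple of $\|f\|_{\za}$ when $0<\alpha<1$, by $2\log\frac{2}{1-|\p(z)|}\|f\|_{\za}$ when $\alpha=1$, and by $C(1-|\p(z)|)^{-(\alpha-1)}\|f\|_{\za}$ when $\alpha>1$. Substituting these into $(1-|z|^2)^\beta|F''(z)|$ and using $1-|\p(z)|\asymp1-|\p(z)|^2$, the first summand is dominated by the $g\p'$ condition times $\|f\|_{\za}$ and the second by the appropriate $g'$ condition (respectively by $\|g'\|_{v_\beta}$ when $0<\alpha<1$) times $\|f\|_{\za}$; together with $|f'(\p(0))g(0)|\le C\|f\|_{\za}$ (finite because $|\p(0)|<1$) this yields $\|\ic f\|_{\zb}\le C\|f\|_{\za}$.

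The necessity is where the real work lies. Assuming $\ic$ bounded, testing on $f(z)=z$ gives $\|g'\|_{v_\beta}\le\|\ic f\|_{\zb}<\infty$, i.e. $g'\in\hvb$, and testing on $z$ and $z^2$ shows $\sup_z(1-|z|^2)^\beta|g(z)\p'(z)|<\infty$, which controls the $g\p'$ expression on the set where $|\p|$ is bounded away from $1$. For $|\p(w)|$ near $1$ I would use the normalized family $f_{a,t}(z)=(1-|a|^2)^{t+2-\alpha}(1-\bar a z)^{-t}$ (with $t+2>\alpha$), which satisfies $\|f_{a,t}\|_{\za}\asymp1$, $f_{a,t}'(a)\asymp(1-|a|^2)^{1-\alpha}$ and $f_{a,t}''(a)\asymp(1-|a|^2)^{-\alpha}$; for the logarithmic $g'$ condition at $\alpha=1$ I would use instead $f_a(z)=\int_0^z\log\frac{2}{1-\bar a\xi}\,d\xi$, for which $\|f_a\|_{\z}\asymp1$, $f_a'(a)\asymp\log\frac{2}{1-|a|^2}$ and $f_a''(a)\asymp(1-|a|^2)^{-1}$.

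The main obstacle is that $F''$ mixes $f''\circ\p$ and $f'\circ\p$, so no single test function isolates one symbol. I would disentangle them in two stages. First, choosing a linear combination $c_1 f_{a,t_1}+c_2 f_{a,t_2}$ with $t_1\ne t_2$ arranged so that $f'(a)=0$ while $f''(a)\asymp(1-|a|^2)^{-\alpha}$ and $\|f\|_{\za}\asymp1$, evaluation at a point $w$ with $\p(w)=a$ leaves only the $g\p'$ term and yields $\frac{(1-|w|^2)^\beta}{(1-|\p(w)|^2)^\alpha}|g(w)\p'(w)|\le C$; taking the supremum and combining with the $|\p|$-small estimate gives the $g\p'$ condition. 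Second, with that condition now available, I apply the extremizer $f_a$ whose first derivative is of maximal order and bound the residual second-derivative term by $(1-|w|^2)^\beta|f_a''(a)|\,|g(w)\p'(w)|\asymp\frac{(1-|w|^2)^\beta}{(1-|\p(w)|^2)^\alpha}|g(w)\p'(w)|\le C$; the triangle inequality then isolates $(1-|w|^2)^\beta|f_a'(a)|\,|g'(w)|\le C$, which is the $g'$ condition in its pointwise form. Finally, feeding these pointwise bounds back through the $\sup_n\leftrightarrow\sup_z$ comparison of the first paragraph produces the conditions exactly as stated in (i)--(iii).
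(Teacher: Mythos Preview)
Your proposal is correct and follows essentially the same route as the paper: reduce the $\sup_n$ and $\sup_z$ forms to one another via Lemma~2.4 and Lemma~2.5, prove sufficiency by plugging the pointwise bounds of Lemma~2.2 into $(1-|z|^2)^\beta|F''(z)|$, and prove necessity by first testing on $z,z^2$ and then using a two-stage test-function argument that first kills $f'(a)$ to isolate the $g\p'$ term and then uses the resulting bound to peel off the $g'$ term. The only cosmetic difference is the choice of test functions: the paper works with the explicit functions $f_a,h_a,g_a=f_a-h_a$ of (2.9)--(2.11) (and, for $\alpha=1$, the function $k_a$ of (2.12) built from $p(z)=(z-1)\big((1+\log\tfrac{1}{1-z})^2+1\big)$), whereas you use the generic family $f_{a,t}=(1-|a|^2)^{t+2-\alpha}(1-\bar a z)^{-t}$ and, for $\alpha=1$, the antiderivative of $\log\frac{2}{1-\bar a\xi}$; both choices achieve exactly the same cancellations.
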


  \begin{proof}
  Suppose that ~$\ic$ is bounded from $\za$ to $\zb$. Using the test functions $f(z)=z$ and $f(z)=z^2$, we have
  $$
  (1-|z|^2)^\beta|(\ic z)''|=(1-|z|^2)^\beta|g'(z)|<\infty,
 $$
  and $$(1-|z|^2)^\beta|(\ic z^2)''|=(1-|z|^2)^\beta|2\p'(z)g(z)+2\p(z)g'(z)|<\infty.$$
    Since ~$\p$ is a self-map, we get that ~$g' \in \hvb$, ~$\p'g \in \hvb$.

  For every ~$0<\alpha<\infty $ and given nonzero ~$a \in D$. We take the test functions
     \begin{equation}
    f_a(z)=\frac{1}{\overline{a}}[\frac{(1-|a|^2)^2}{(1-\overline{a}z)^\alpha\alpha}-\frac{1-|a|^2}{(1-\overline{a}z)^{\alpha-1}}],
    \end{equation}

   \begin{equation}
    h_a(z)=\frac{1}{\overline{a}}\int_0 ^z \frac{1-|a|^2}{(1-\overline{a}w)^\alpha}dw,
    \end{equation}
  \begin{equation}
    g_a(z)= f_a(z)- h_a(z),
    \end{equation}
  for every ~$z \in D$. One can show that $f_a, h_a$, and $g_a$ are in $\zao$, $\sup_{\frac12<a<1}\|f_a\|_{\za}<\infty$ and $\sup_{\frac12<a<1}\|h_a\|_{\za}<\infty$.
     Since $g_a'(a)=0,~ g_a''(a)=\frac{\alpha}{(1-|a|^2)^{\alpha}}$, it follows that for all $z\in D$ with $|\p(z)|>\frac12$, we have
   \begin{eqnarray*}
+\infty>C\|g_a\|_{\za}&\geq &\|\ic(g_{\p(z)})\|_{\zb}\\
 &\geq& (1-|z|^2)^\beta|\p'(z)g(z)||g''_{\p(z)}(\p(z))|-(1-|z|^2)^\beta|g'(z)||g'_{\p(z)}(\p(z))|\\
&=&(1-|z|^2)^\beta|\p'(z)g(z)||\frac{\alpha}{(1-|\p(z)|^2)^\alpha}|.\\
\end{eqnarray*}
Then
\begin{eqnarray*}
\displaystyle\sup_{z \in D}|\p'(z)g(z)|\frac{(1-|z|^2)^\beta}{(1-|\p(z)|^2)^\alpha}&\leq& \sup_{|\p(z)|\leq\frac 12}\frac{|\p'(z)g(z)|(1-|z|^2)^\beta}{(1-|\p(z)|^2)^\alpha}+\sup_{|\p(z)|>\frac 12}\frac{|\p'(z)g(z)|(1-|z|^2)^\beta}{(1-|\p(z)|^2)^\alpha}\\
&\leq& (\frac 34)^{\a}\|\p g'\|_{\vb}+C\|g_a\|_{\za}<\infty.\\
\end{eqnarray*}

Now we use (2) and Lemma 2.4 to conclude that
$$
\sup _{n\geq 0}(n+1)^\alpha \|g(\p'){\p}^n\|_{\vb}\asymp \sup _{z \in D}\frac{(1-|z|^2)^\beta}{(1-|\p(z)|^2)^\alpha}|g(z)\p'(z)|<\infty,$$
which shows that (4) is necessary for all case.

Conversely, suppose that $g'\in \hvb$ and (4) hold. Assume that $f\in \za$. From Lemma 2.2, it follows that
\begin{eqnarray*}
(1-|z|^2)^\beta|(\ic f)''(z)|&=&(1-|z|^2)^\beta||\p'(z)g(z)f''(\p(z))+g'(z)f'(\p(z))|\\
&\leq&(1-|z|^2)^\beta|\p'(z)g(z)f''(\p(z))|+(1-|z|^2)^\beta|g'(z)f'(\p(z))|\\
&\leq&\frac{(1-|z|^2)^\beta}{(1-|\p(z)|^2)^\alpha}|\p'(z)g(z)|\|f\|_{\za}+C(1-|z|^2)^\beta|g'(z)|\|f\|_{\za}\\
&\leq&C\|f\|_{\za},
\end{eqnarray*}
$$
|\ic (f)(0)|=0,\quad\mbox{and}\quad\quad
|(\ic f)'(0)|=|f'(\varphi(0))g(0)|\leq \|f\|_{\za}|g(\varphi(0))|,
$$
which implies that $\ic$ is bounded. This completes the proof of (i).

  Next we will prove (ii).  The necessity in condition (5) has been  proved  above.
Fixed $a\in D$ with $|a|>\frac{1}{2}$, we take the function
\begin{equation}
    k_a(z)=\frac{p(\overline{a}z)}{\overline{a}}(\log\frac{1}{1-|a|})^{-1},
    \end{equation}
for ~$z \in D$, where $$ p(z)=(z-1)((1+\log\frac{1}{1-z})^2+1).$$
Then we have ~$\sup_{\frac{1}{2}<|a|<1}\|k_a\|_{\za}\leq C$ by ~$(\mbox{\rm\cite{li3}})$~.
Let ~$a=\p(z)$, it follows that
\begin{eqnarray*}
\|\ic (k_{\p(z)})\|_{\zb}&\geq& (1-|z|^2)^\beta|g'(z)||k'_{\p(z)}(\p(z))|-(1-|z|^2)^\beta|\p'(z)g(z)||k''_{\p(z)}(\p(z))|\\
&=&(1-|z|^2)^\beta|g'(z)|\log \frac{1}{1-|\p(z)|^2}-(1-|z|^2)^\beta|\p'(z)g(z)||\frac{\alpha}{(1-|\p(z)|^2)^\alpha}|.\\
\end{eqnarray*}
Since (5) holds and $\ic$ is bounded, we obtain that
\begin{eqnarray*}
\displaystyle\sup_{|\p(z)|>\frac{1}{2}} (1-|z|^2)^\beta\log \frac{1}{1-|\p(z)|^2}|g'(z)|&\leq&\displaystyle\sup_{|\p(z)|>\frac{1}{2}}(1-|z|^2)^\beta|\p'(z)g(z)||\frac{\alpha}{(1-|\p(z)|^2)^\alpha}|\\
&+&\displaystyle\sup_{|\p(z)|>\frac{1}{2}}\|\ic (k_{\p(z)})\|_{\zb}\\
&<& \infty.\\
\end{eqnarray*}
Noting ~$g'\in \hvb$, and together with (3) and Lemma 2.4, we conclude that  (6) holds.

The converse implication can be shown as in the proof of (i).

Finally we will prove (iii).  We have proved that (7) holds above. To prove (8), we take function ~$f_{\p(z)}$ defined in (9) for every ~$z\in D$ with ~$|\p(z)|>\frac{1}{2}$, and obtain that
\begin{eqnarray*}
\|\ic (f_{\p(z)})\|_{\zb}&\geq& (1-|z|^2)^\beta|g'(z)||f'_{\p(z)}(\p(z))|-(1-|z|^2)^\beta|\p'(z)g(z)||f''_{\p(z)}(\p(z))|\\
&=&(1-|z|^2)^\beta|g'(z)| \frac{1}{|\p(z)|(1-|\p(z)|^2)^{\alpha-1}}\\
&-&(1-|z|^2)^\beta|\p'(z)g(z)||\frac{2\alpha}{(1-|\p(z)|^2)^\alpha}|.\\
\end{eqnarray*}
Since ~$\ic$ is bounded and (7) holds, we obtain that
\begin{eqnarray*}
\displaystyle \sup _{|\p(z)|>\frac{1}{2}}\frac{(1-|z|^2)^\beta}{(1-|\p(z)|^2)^{\alpha-1}}|g'(z)|&\leq&\displaystyle \sup _{|\p(z)|>\frac{1}{2}}(1-|z|^2)^\beta|\p'(z)g(z)||\frac{2\alpha}{(1-|\p(z)|^2)^\alpha}|\\
&+&\displaystyle \sup _{|\p(z)|>\frac{1}{2}}\|\ic (f_{\p(z)})\|_{\zb}< \infty,\\
\end{eqnarray*}
therefore, we deduce that (8) holds by (2) and Lemma 2.4.

The converse implication can be shown as in the proof of (i).
  \end{proof}
\begin{theorem}
   Let $\varphi$ be an analytic self-map of $D$  and $g\in H(D)$.

   (i)~~If $0<\alpha<1$, then  $\ci:\za \rightarrow \zb $ is a bounded operator if and only if  ~$((g\circ \p)(\p'')+(g'\circ\p)(\p')^2) \in \hvb$ and
   \begin{equation}
   \sup _{n\geq 0}(n+1)^\alpha \|(g\circ \p)(\p')^2{\p}^n\|_{\vb}\asymp \sup _{z \in D}\frac{(1-|z|^2)^\beta}{(1-|\p(z)|^2)^\alpha}|g(\p(z))(\p'(z))^2|<\infty.
   \end{equation}
   (ii)~~If $\alpha=1$, then  $\ci:\za \rightarrow \zb $ is a bounded operator if and only if
    \begin{equation}
    \sup _{n\geq 0}(n+1) \|(g\circ \p)(\p')^2{\p}^n\|_{\vb}\asymp \sup _{z \in D}\frac{(1-|z|^2)^\beta}{1-|\p(z)|^2}|g(\p(z))(\p'(z))^2|<\infty
    \end{equation}
    and
    \begin{equation}
    \begin{aligned}
    \sup _{n\geq 0}(\log n) \|((g'\circ \p)(\p')^2+(g\circ \p)(\p'')){\p}^n\|_{\vb}\asymp \sup _{z \in D}(1-|z|^2)^\beta \log \frac{2}{1-|\p(z)|^2}\\\times|g'(\p(z))(\p'(z))^2
    +(g(\p(z))\p''(z)|<\infty.
\end{aligned}
\end{equation}

(iii)~~If $\alpha>1$, then   $\ci:\za \rightarrow \zb $ is a bounded operator if and only if (13) holds and
\begin{equation}
\begin{aligned}
 \sup _{n\geq 0}(n+1)^{\alpha-1} \|((g'\circ \p)(\p')^2+(g\circ \p)(\p'')){\p}^n\|_{\vb}\asymp \sup _{z \in D} \frac{(1-|z|^2)^\beta}{(1-|\p(z)|^2)^{\alpha-1}}\\\times|g'(\p(z))(\p'(z))^2
+(g(\p(z))\p''(z)|<\infty.
\end{aligned}
\end{equation}

  \end{theorem}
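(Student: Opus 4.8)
The plan is to follow the same scheme as the proof of Theorem 2.1, the only genuinely new ingredient being the chain-rule computation for $\ci = C_\varphi V_g$. Differentiating $(\ci f)(z)=\int_0^{\p(z)} f'(\xi)g(\xi)\,d\xi$ twice gives
$$(\ci f)'(z)=f'(\p(z))g(\p(z))\p'(z),$$
$$(\ci f)''(z)=A(z)f''(\p(z))+B(z)f'(\p(z)),$$
where $A(z)=g(\p(z))(\p'(z))^2$ and $B(z)=g'(\p(z))(\p'(z))^2+g(\p(z))\p''(z)$ are exactly the symbols appearing in (13)--(15). Thus $A$ plays here the role that $\p'g$ played for $\ic$, and $B$ the role of $g'$, so the conditions split into one controlling the $f''$-part (through $A$ and the weight $1/(1-|\p|^2)^\alpha$) and one controlling the $f'$-part (through $B$ and the $\alpha$-dependent growth of $|f'|$ given by Lemma 2.2).

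For necessity I would first feed in the test functions $f(z)=z$ and $f(z)=z^2$. The first gives $(\ci z)''(z)=B(z)$, whence $B\in\hvb$; the second gives $(\ci z^2)''(z)=2A(z)+2\p(z)B(z)$, and since $\p$ is bounded and $B\in\hvb$ is already in hand, we deduce $A\in\hvb$. These control the region $|\p(z)|\le\tfrac12$. To capture the sharp growth on $\{|\p(z)|>\tfrac12\}$ I would reuse the extremal functions (9)--(14). Applying $\ci$ to $g_{\p(z)}$ and using $g_a'(a)=0$, $g_a''(a)=\alpha/(1-|a|^2)^\alpha$ annihilates the $B$-term and isolates
$$(1-|z|^2)^\beta|(\ci g_{\p(z)})''(z)|=(1-|z|^2)^\beta\frac{\alpha}{(1-|\p(z)|^2)^\alpha}|A(z)|\le C\|g_{\p(z)}\|_{\za}\le C,$$
which together with $A\in\hvb$ yields the supremum form of (13). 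For the $f'$-part, when $0<\alpha<1$ the bound $|f'|\le C\|f\|_{\za}$ of Lemma 2.2(i) shows nothing beyond $B\in\hvb$ is needed, matching part (i); when $\alpha=1$ I would apply $\ci$ to $k_{\p(z)}$ and when $\alpha>1$ to $f_{\p(z)}$, bounding $(1-|z|^2)^\beta|B(z)|$ times the relevant growth factor by $(1-|z|^2)^\beta|(\ci\,\cdot\,)''(z)|$ plus the $A$-contribution, and absorbing the latter via (13) already proved. This isolates $B$ against $\log\frac{1}{1-|\p|^2}$ (for $\alpha=1$) or $(1-|\p|^2)^{-(\alpha-1)}$ (for $\alpha>1$) and gives the supremum form of (14)--(15). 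Each supremum form is then converted to the stated sequence form $\sup_n(n+1)^{\bullet}\|\cdot\,\p^n\|_{\vb}$ by Lemma 2.4 with $u=A$ or $u=B$, using $\widetilde{v_\alpha}=v_\alpha$, $\widetilde{v_{\log}}=v_{\log}$, together with $\|z^n\|_{v_\alpha}\asymp(n+1)^{-\alpha}$ and $(\log n)\|z^n\|_{v_{\log}}\to 1$ from Lemma 2.5.

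For sufficiency the estimate is routine: from $(\ci f)''(z)=A(z)f''(\p(z))+B(z)f'(\p(z))$ and Lemma 2.2 (bounding $|f''(\p(z))|$ by $\|f\|_{\za}(1-|\p(z)|^2)^{-\alpha}$ and $|f'(\p(z))|$ by the appropriate $\alpha$-dependent factor), each condition makes the corresponding term $\le C\|f\|_{\za}$, so $\sup_z(1-|z|^2)^\beta|(\ci f)''(z)|\le C\|f\|_{\za}$. The two point-evaluation terms in the $\zb$-norm need separate care: unlike $\ic$, here $(\ci f)(0)=\int_0^{\p(0)}f'(\xi)g(\xi)\,d\xi$ need not vanish, but since $\p(0)\in D$ is fixed, Lemma 2.2 bounds $|f'|$ on the compact segment $[0,\p(0)]$ by $C\|f\|_{\za}$, and likewise $|(\ci f)'(0)|=|f'(\p(0))g(\p(0))\p'(0)|\le C\|f\|_{\za}$. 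The main obstacle is bookkeeping rather than conceptual: because $A$ and $B$ are genuinely intertwined (both carry the factors $g\circ\p$ and $\p'$), one must extract them in the right order from the test functions and verify that the extremal functions retain the stated derivative values at $a=\p(z)$; once the two symbols are cleanly separated, each half runs verbatim as in Theorem 2.1.
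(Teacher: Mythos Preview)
Your proposal is correct and follows exactly the approach the paper itself takes: the paper simply states ``The proof is similar to that of Theorem 2.1, the details are omitted,'' and you have accurately supplied those details by computing $(\ci f)''=A f''\circ\p+B f'\circ\p$ with $A=(g\circ\p)(\p')^2$ and $B=(g'\circ\p)(\p')^2+(g\circ\p)\p''$, then rerunning the Theorem~2.1 argument with $A$ in the role of $g\p'$ and $B$ in the role of $g'$. Your handling of the boundary terms $(\ci f)(0)$ and $(\ci f)'(0)$ is also correct and slightly more careful than what the paper sketches elsewhere.
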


The proof is similar to that of Theorem 2.1, the details are omitted.

  \begin{theorem}
   Let $\varphi$ be an analytic self-map of $D$  and~$g\in H(D)$.

   (i)~~If $0<\alpha<1$, then $\cj:\za \rightarrow \zb $ is a bounded operator if and only if  $((g'\circ \p)(\p'')+(g''\circ\p)(\p')^2) \in \hvb$ and
   ~$(g'\circ \p)(\p')^2 \in \hvb$~.

   (ii)~~If $\alpha=1$, then $\cj:\za \rightarrow \zb $ is a bounded operator if and only if $((g'\circ \p)(\p'')+(g''\circ\p)(\p')^2) \in \hvb$ and
    \begin{equation}
    \sup _{n\geq 0}(\log n) \|(g'\circ \p)(\p')^2{\p}^n\|_{\vb}\asymp \sup _{z \in D}(1-|z|^2)^\beta\log(\frac{2}{1-|\p(z)|^2})|g'(\p(z))(\p'(z))^2|<\infty,
    \end{equation}

(iii)~~If $1<\alpha<2$, then $\cj:\za \rightarrow \zb $ is a bounded operator if and only if $((g'\circ \p)(\p'')+(g''\circ\p)(\p')^2) \in \hvb$ and
\begin{equation}
\begin{aligned}
 \sup _{n\geq 0}(n+1)^{\alpha-1} \|((g'\circ \p)(\p')^2){\p}^n\|_{\vb}\asymp \sup _{z \in D} \frac{(1-|z|^2)^\beta}{(1-|\p(z)|^2)^{\alpha-1}}\\\times|g'(\p(z))(\p'(z))^2|<\infty.
\end{aligned}
\end{equation}

(iv)~~If $\alpha=2$, then $\cj:\za \rightarrow \zb $ is a bounded operator if and only if
\begin{equation}
\begin{aligned}
 \sup _{n\geq 0}(n+1)\|((g'\circ \p)(\p')^2){\p}^n\|_{\vb}\asymp \sup _{z \in D} \frac{(1-|z|^2)^\beta}{(1-|\p(z)|^2)}\\\times|g'(\p(z))(\p'(z))^2|<\infty
\end{aligned}
\end{equation}
and
\begin{equation}
\begin{aligned}
 \sup _{n\geq 0}(\log n ) \|((g''\circ \p)(\p')^2+(g'\circ\p)\p''){\p}^n\|_{\vb}\asymp \sup _{z \in D}(1-|z|^2)^\beta \log (\frac{2}{(1-|\p(z)|^2)})\\\times|g''(\p(z))(\p'(z))^2+g'(\p(z))\p''(z)|<\infty.
\end{aligned}
\end{equation}

(v)~~If $\alpha >2$, then $\cj:\za \rightarrow \zb $ is a bounded operator if and only if (18) holds and
\begin{equation}
\begin{aligned}
 \sup _{n\geq 0}(n+1)^{\alpha-2} \|((g''\circ \p)(\p')^2+(g'\circ\p)\p''){\p}^n\|_{\vb}\asymp \sup _{z \in D} \frac{(1-|z|^2)^\beta}{(1-|\p(z)|^2)^{\alpha-2}}\\\times|g''(\p(z))(\p'(z))^2+g'(\p(z))\p''(z)|<\infty.
\end{aligned}
\end{equation}

  \end{theorem}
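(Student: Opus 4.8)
The plan is to mirror the proof of Theorem 2.1, the only structural novelty being that the second derivative of $\cj$ mixes $f$ and $f'$ rather than $f'$ and $f''$. I would first differentiate $(\cj f)'(z)=f(\p(z))g'(\p(z))\p'(z)$ once more to obtain
\[
(\cj f)''(z)=f'(\p(z))\,A(z)+f(\p(z))\,B(z),
\]
where $A(z)=g'(\p(z))(\p'(z))^2$ and $B(z)=g''(\p(z))(\p'(z))^2+g'(\p(z))\p''(z)$. Thus everything reduces to controlling $(1-|z|^2)^\beta\big(|f'(\p(z))|\,|A(z)|+|f(\p(z))|\,|B(z)|\big)$. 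As in Theorem 2.1, the equivalences ``$\asymp$'' linking the $\sup_n$ and $\sup_z$ forms of the conditions are not part of the real content: taking $v=v_\alpha$ (resp.\ $v=v_{\log}$) and $w=v_\beta$ in Lemma 2.4 and inserting the asymptotics of $\|z^n\|_{v_\alpha}$, $\|z^n\|_{v_{\log}}$ from Lemma 2.5 converts each $\sup_z$ condition into its $\sup_n$ counterpart, so I would prove the $\sup_z$ versions and cite these lemmas once.

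For sufficiency I would bound $|f(\p(z))|$ and $|f'(\p(z))|$ by Lemma 2.2. The decisive point is that the growth thresholds of $|f|$ and $|f'|$ differ by one: $|f'|$ is bounded, logarithmic, or of order $(1-|\p(z)|^2)^{-(\alpha-1)}$ according as $\alpha<1$, $\alpha=1$, $\alpha>1$, while $|f|$ is bounded, logarithmic, or of order $(1-|\p(z)|^2)^{-(\alpha-2)}$ according as $\alpha<2$, $\alpha=2$, $\alpha>2$. This is precisely why five cases occur, the $A$-term producing the split at $\alpha=1$ and the $B$-term the split at $\alpha=2$. In each case, substituting the corresponding bound and invoking the hypotheses gives $(1-|z|^2)^\beta|(\cj f)''(z)|\le C\|f\|_{\za}$; together with $|(\cj f)(0)|$ and $|(\cj f)'(0)|=|f(\p(0))g'(\p(0))\p'(0)|$, both dominated by $C\|f\|_{\za}$ via point evaluation, boundedness follows.

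For necessity I would test $\cj$ on functions engineered to isolate one coefficient at a time. Taking $f\equiv1$ gives $(\cj f)''=B$, hence $B\in\hvb$, and then $f(z)=z$ gives $(\cj f)''=A+\p B$, hence $A\in\hvb$; this settles case (i) and, crucially, records an $\hvb$-bound for $B$. To reach the sharp $A$-conditions (17), (18), (19) I would feed in a function with extremal first derivative and controlled value at $a=\p(z)$: the function $h_a$ of (10), for which $h_a'(a)\asymp(1-|a|^2)^{-(\alpha-1)}$ and $h_a(a)\asymp(1-|a|^2)^{-(\alpha-2)}$ when $\alpha>1$, and the logarithmic function $k_a$ of (12), for which $k_a'(a)\asymp\log\frac{2}{1-|a|^2}$, when $\alpha=1$. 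Transposing the second summand of $(\cj f)''$ turns the wanted bound on $(1-|z|^2)^\beta|f'(\p(z))|\,|A(z)|$ into $\|\cj\|\,\|f\|_{\za}$ plus an error $(1-|z|^2)^\beta|f(\p(z))|\,|B(z)|$ that the previously obtained $B$-bound absorbs. To reach the sharp $B$-conditions (20), (21) I would instead use $g_a$ of (11), whose property $g_a'(a)=0$ kills the $A$-term, so that $(\cj g_a)''(z)=g_a(\p(z))B(z)$ and boundedness of $\cj$ bounds $(1-|z|^2)^\beta|g_a(\p(z))|\,|B(z)|$ directly; here $g_a(a)\asymp(1-|a|^2)^{-(\alpha-2)}$ yields the power $\alpha-2$ of (21).

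The one genuinely new ingredient, and the main obstacle, is the case $\alpha=2$: condition (20) asks for the $B$-coefficient weighted \emph{logarithmically}, yet $g_a(a)$ is merely bounded when $\alpha=2$, so no function carried over from Theorem 2.1 exposes this growth (and one also notes $h_a$, not $f_a$, must be used for the companion $A$-condition (19), since $f_a'(a)$ degenerates at $\alpha=2$). I would therefore build the value-analogue of $k_a$, namely a $\kappa_a\in\zz$ with $\kappa_a'(a)=0$ and $\kappa_a(a)\asymp\log\frac{2}{1-|a|^2}$, for instance
\[
\kappa_a(z)=\log\frac{2}{1-\overline{a}z}-\frac{1-|a|^2}{1-\overline{a}z},
\]
for which $\kappa_a'(a)=0$, $\kappa_a(a)=\log\frac{2}{1-|a|^2}-1$, and $\sup_{\frac12<|a|<1}\|\kappa_a\|_{\zz}<\infty$ by the elementary inequality $(1-|z|^2)\le 2|1-\overline{a}z|$. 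Verifying this uniform norm bound and the vanishing of the derivative is the sole computation that does not transcribe from Theorem 2.1; the remaining bookkeeping --- in particular settling each $B$-condition before its paired $A$-condition so that the cross term $f(\p(z))B(z)$ is controlled --- is routine.
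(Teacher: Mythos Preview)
Your plan is correct and closely parallels the paper's proof, but it diverges from the paper in a genuine way for cases (iv) and (v). The paper proceeds in the opposite order: it first establishes the sharp $A$-condition (using $f_a$ from (9), not $h_a$, though either works) and only then attacks the sharp $B$-condition with test functions that do \emph{not} kill the $A$-term, namely $t_a(z)=\log\frac{2}{1-\overline{a}z}$ for $\alpha=2$ and the kernel $t_a(z)=\frac{(1-|a|^2)^2}{(1-\overline{a}z)^\alpha}$ for $\alpha>2$; the resulting cross-term $(1-|z|^2)^\beta|t_a'(\p(z))|\,|A(z)|$ is absorbed by the already-known sharp $A$-bound. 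Your route instead annihilates the cross-term outright by choosing test functions with vanishing derivative at $a=\p(z)$: the known $g_a$ of (11) for $\alpha>2$, and your new $\kappa_a(z)=\log\frac{2}{1-\overline{a}z}-\frac{1-|a|^2}{1-\overline{a}z}$ for $\alpha=2$. This is cleaner---it decouples the two conditions completely and avoids the bootstrap---at the cost of verifying one additional test function; conversely the paper's approach recycles only ``standard'' kernels but must keep track of which sharp bound feeds into which error term. Both are valid; your observation that $f_a'(a)$ degenerates at $\alpha=2$ (forcing $h_a$ there) is a good catch that the paper glosses over. One small imprecision: $h_a(a)\asymp(1-|a|^2)^{-(\alpha-2)}$ holds only for $\alpha>2$; for $1<\alpha\le 2$ one has $h_a(a)$ bounded, which is exactly what your argument needs anyway.
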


\begin{proof}
Suppose that $\cj$ is bounded from $\za$ to $\zb$.

(i) {\bf Case $0<\alpha<1$.} Using functions $f=1 \in \za$ and $f=z \in \za$, we obtain
\begin{equation}
 \sup _{z \in D} (1-|z|^2)^\beta|g''(\p(z))(\p'(z))^2+g'(\p(z))\p''(z)|<\infty,
\end{equation}
and
\begin{equation}
 \sup _{z \in D} (1-|z|^2)^\beta|g'(\p(z))(\p'(z))^2|<\infty.
\end{equation}
Then we obtain that $((g'\circ \p)(\p'')+(g''\circ\p)(\p')^2) \in \hvb$ and
   $(g'\circ \p)(\p')^2 \in \hvb$ are necessary for all case.

For the converse implication, suppose that $((g'\circ \p)(\p'')+(g''\circ\p)(\p')^2) \in \hvb$ and $(g'\circ \p)(\p')^2 \in \hvb$.
  For $f\in \za$,  it follows from Lemma 2.2 that
\begin{eqnarray*}
(1-|z|^2)^\beta|(\cj f)''(z)|&=&(1-|z|^2)^\beta||(\p'(z))^2g'(\p(z))f'(\p(z))+(g''(\p(z))(\p'(z))^2\\
&+&g'(\p(z))\p''(z))f(\p(z))|
\leq(1-|z|^2)^\beta|(\p'(z))^2g'(\p(z))f'(\p(z))|\\
&+&(1-|z|^2)^\beta|(g''(\p(z))(\p'(z))^2+g'(\p(z))\p''(z))f(\p(z))|\\
&\leq&(1-|z|^2)^\beta|(\p'(z))^2g'(\p(z))|\|f\|_{\za}\\
&+&(1-|z|^2)^\beta|g''(\p(z))(\p'(z))^2+g'(\p(z))\p''(z)|\|f\|_{\za}
\leq C\|f\|_{\za},\\
\end{eqnarray*}
\begin{eqnarray*}
|\cj (f)(0)|&=&|\int_0 ^{\varphi(0)} f(\zeta)g'(\zeta) d\zeta|\\
\ &\ \\
&\leq&\displaystyle\max_{\zeta \leq |\varphi(0)|}|f(\zeta)|\displaystyle\max_{\zeta \leq |\varphi(0)|}|g'(\zeta)|\leq\displaystyle\max_{\zeta \leq |\varphi(0)|}\|f\|_{\za}\displaystyle\max_{\zeta \leq |\varphi(0)|}|g'(\zeta)|,\\
\end{eqnarray*}
and
$$
|(\cj f)'(0)|=|f(\varphi(0))\varphi'(0)g'(\varphi(0))|\leq \|f\|_{\za}|\varphi'(0)||g'(\varphi(0))|.
$$
Then $\cj$ is bounded. This completes the proof of (i).

 (ii) {\bf Case $\alpha=1$.}  We consider the test function ~$ k_{\p(z)}(z)$  defined in (12) for every $z \in D$ with $|\p(z)|>\frac{1}{2}$. It follows that
 \begin{eqnarray*}
\|\cj k_{\p(z)}\|_{\zb} &\geq& \displaystyle\sup_{z \in D}(1-|z|^2)^\beta|(\cj k_{\p(z)})''(z)|\\
&\geq& \displaystyle\sup_{|\p(z)|>\frac{1}{2}}\log(\frac{1}{1-|\p(z)|^2})(1-|z|^2)^\beta|(\p'(z))^2g'(\p(z))|\\
&-& \displaystyle\sup_{|\p(z)|>\frac{1}{2}}(1-|z|^2)^\beta|\p''(z)g'(\p(z))+g''(\p(z))(\p(z))^2||k_{\p(z)}(\p(z))|.\\
\end{eqnarray*}
Since ~$((g'\circ \p)(\p'')+(g''\circ\p)(\p')^2) \in \hvb$, and $\displaystyle\sup_{|\p(z)|>\frac{1}{2}}\|\cj k_{\p(z)}\|_{\z_1}\leq C$, we get
 \begin{eqnarray*}
\ &\displaystyle\sup_{|\p(z)|>\frac{1}{2}}\log\frac{1}{1-|\p(z)|^2}(1-|z|^2)^\beta|(\p'(z))^2g'(\p(z))|  \\
\  &\displaystyle\leq\displaystyle\sup_{|\p(z)|>\frac{1}{2}}(1-|z|^2)^\beta|\p''(z)g'(\p(z))+g''(\p(z))(\p(z))^2||k_{\p(z)}(\p(z))|
 +\|\cj k_{\p(z)}\|< \infty.
\end{eqnarray*}
Then we have
\begin{eqnarray*}
 & &\displaystyle\sup_{z\in D}\log(\frac{1}{1-|\p(z)|^2})(1-|z|^2)^\beta|(\p'(z))^2g'(\p(z))|\\
&\leq& \displaystyle\sup_{|\p(z)|\leq\frac{1}{2}}\log(\frac{1}{1-|\p(z)|^2})(1-|z|^2)^\beta|(\p'(z))^2g'(\p(z))|\\
&+&  \displaystyle\sup_{|\p(z)|>\frac{1}{2}}\log(\frac{1}{1-|\p(z)|^2})(1-|z|^2)^\beta|(\p'(z))^2g'(\p(z))| \\
  &\leq& C+\log(\frac 43)\|(g'\circ \p)(\p')^2\|_{\vb}<\infty.
\end{eqnarray*}
On the other hand, from  (3) and Lemma 2.4, we have
$$
    \sup _{n\geq 0}(\log n) \|(g'\circ \p)(\p')^2{\p}^n\|_{\vb}\asymp \sup _{z \in D}(1-|z|^2)^\beta\log(\frac{2}{1-|\p(z)|^2})|g'(\p(z))(\p'(z))^2|.
 $$
Hence (17) holds.

  The converse implication can be shown as in the proof of (i).

(iii) {\bf Case $1<\alpha<2$.} $((g'\circ \p)(\p'')+(g''\circ\p)(\p')^2) \in \hvb$ has been proved above.  We take the test function $f_{\p(z)}$ in (9) for every $z \in D$ with $|\p(z)|>\frac{1}{2}$ , by the same way as (ii), we can obtain that (18) holds.

The converse implication can be shown as in the proof of (i).

(iv) {\bf Case $\alpha=2$.} We have prove that (19) holds above. To prove (20), we consider another test function ~$t_a(z)=\log\frac{2}{1-\overline{a}z}$.  Clearly ~$t_a\in \zz$  and $\sup_{\frac{1}{2}<|a|<1}\|t_a\|_{\zz}<\infty$. For every ~$z \in D$ with ~$|\p(z)|>\frac{1}{2}$, it follows that
 \begin{eqnarray*}
\displaystyle\sup_{z \in D}(1-|z|^2)^\beta|(\cj t_a)''(z)|
&\geq& \displaystyle\sup_{|\p(z)|>\frac{1}{2}}\log(\frac{1}{1-|\p(z)|^2})(1-|z|^2)^\beta|(\p'(z))^2g''(\p(z))\\
&+&g'(\p(z))\p''(z)|- \displaystyle\sup_{|\p(z)|>\frac{1}{2}}\frac{(1-|z|^2)^\beta}{1-|\p(z)|^2}|g'(\p(z))(\p(z))^2|.\\
\end{eqnarray*}
Applying (19) we get
\begin{eqnarray*}
&\ &\displaystyle\sup_{|\p(z)|>\frac{1}{2}}\log(\frac{1}{1-|\p(z)|^2})(1-|z|^2)^\beta|(\p'(z))^2g''(\p(z))+g'(\p(z))\p''(z)|\\
&\leq& \displaystyle\sup_{|\p(z)|>\frac{1}{2}}\frac{(1-|z|^2)^\beta}{1-|\p(z)|^2}|g'(\p(z))(\p(z))^2|+\|\cj t_a\| < \infty.\\
\end{eqnarray*}

Noting $((g'\circ \p)(\p'')+(g''\circ\p)(\p')^2) \in \hvb$, and using Lemma 2.4 and (3), we conclude that (20) holds.

(v) {\bf Case $\alpha>2$.}  We have  proved that  (18) holds above.
Applying test function ~$f_{\p(z)}$ in (9) for every ~$z \in D$  with $|\p(z)|>\frac{1}{2}$, we have
\begin{eqnarray*}
S_1=\displaystyle\sup_{|\p(z)|>\frac{1}{2}}(1-|z|^2)^\beta|\frac{(\p'(z))^2g'(\p(z))}{\overline{\p(z)}(1-|\p(z)|^2)^{\alpha-1}}|
&=&\displaystyle\sup_{|\p(z)|>\frac{1}{2}}(1-|z|^2)^\beta|(\p'(z))^2g'(\p(z))||f'_{\p(z)}(\p(z))|\\
&\leq& \displaystyle\sup_{|\p(z)|>\frac{1}{2}}\|\cj(f_{\p(z)})\|_{\zb}<\infty.\\
\end{eqnarray*}
With the same calculation for test function $t_{\p(z)}(\p(z))=\frac{(1-|\p(z)|^2)^2}{(1-\overline{\p(z)}z)^\alpha}$ with $|\p(z)|>\frac{1}{2}$, then $\sup_{|\p(z)|>\frac{1}{2}}\|t_{\p(z)}\|_{\za}\leq C$, and we have that
\begin{eqnarray*}
S_2&=&\displaystyle\sup_{|\p(z)|>\frac{1}{2}}(1-|z|^2)^\beta|\frac{(\p'(z))^2g''(\p(z))+g'(\p(z))\p''(z)}{(1-|\p(z)|^2)^{\alpha-2}}
+\alpha\overline{\p(z)}\frac{(\p'(z))^2g'(\p(z))}{(1-|\p(z)|^2)^{\alpha-1}}|\\
&=&\displaystyle\sup_{|\p(z)|>\frac{1}{2}}(1-|z|^2)^\beta|(\cj(t_{\p(z)}))''(\p(z))|
\leq \|\cj\|_{\za\rightarrow \zb}\displaystyle\sup_{|\p(z)|>\frac{1}{2}}\|t_{\p(z)}\|_{\za}<\infty.\\
\end{eqnarray*}
Therefore,
\begin{eqnarray*}
\ &\displaystyle\sup_{|\p(z)|>\frac{1}{2}}(1-|z|^2)^\beta|\frac{(\p'(z))^2g''(\p(z))+g'(\p(z))\p''(z)}{(1-|\p(z)|^2)^{\alpha-2}}|\\
&\leq  S_2+\alpha\displaystyle\sup_{|\p(z)|>\frac{1}{2}}\frac{|(\p'(z))^2g'(\p(z))|}{(1-|\p(z)|^2)^{\alpha-1}}\\
&\leq S_2+\alpha\displaystyle\sup_{|\p(z)|>\frac{1}{2}}\frac{|(\p'(z))^2g'(\p(z))|}{|\overline{\p(z)}|(1-|\p(z)|^2)^{\alpha-1}}\leq  S_2+\alpha S_1<\infty.\\
\end{eqnarray*}
Since $((g'\circ \p)(\p'')+(g''\circ\p)(\p')^2) \in \hvb$, we conclude that (21) holds.

\end{proof}

  \begin{theorem}
   Let $\varphi$ be an analytic self-map of $D$  and~$g\in H(D)$.

   (i)~~If ~$0<\alpha<1$, then $\jc:\za \rightarrow \zb $~is a bounded operator if and only if~$g'\p' \in \hvb$ and~$g'' \in \hvb$.

   (ii)~~If ~$\alpha=1$, then $\jc:\za \rightarrow \zb $~is a bounded operator if and only if~$g'' \in \hvb$ and
    \begin{equation}
    \sup _{n\geq 0}(\log n) \|(g'\p'){\p}^n\|_{\vb}\asymp \sup _{z \in D}(1-|z|^2)^\beta\log \frac{2}{1-|\p(z)|^2}|g'(z)\p'(z)|<\infty.
    \end{equation}

(iii)~~If ~$1<\alpha<2$, then $\jc:\za \rightarrow \zb $~is a bounded operator if and only if ~$g'' \in \hvb$ and
\begin{equation}
 \sup _{n\geq 0}(n+1)^{\alpha-1} \|(g'\p'){\p}^n\|_{\vb}\asymp \sup _{z \in D} \frac{(1-|z|^2)^\beta}{(1-|\p(z)|^2)^{\alpha-1}}|g'(z)\p'(z)|<\infty.
\end{equation}

(iv)~~If ~$\alpha=2$, then $\jc:\za \rightarrow \zb $~is a bounded operator if and only if

\begin{equation}
 \sup _{n\geq 0}(n+1) \|(g'\p'){\p}^n\|_{\vb}\asymp \sup _{z \in D} \frac{(1-|z|^2)^\beta}{(1-|\p(z)|^2)}|g'(z)\p'(z)|<\infty,
\end{equation}
and
\begin{equation}
 \sup _{n\geq 0}(\log n ) \|(g''){\p}^n\|_{\vb}\asymp \sup _{z \in D}(1-|z|^2)^\beta \log \frac{2}{(1-|\p(z)|^2)}|g''(z)|<\infty.
\end{equation}
(v)~~If ~$\alpha >2$, then $\jc:\za \rightarrow \zb $~is a bounded operator if and only if (25) holds and
\begin{equation}
 \sup _{n\geq 0}(n+1)^{\alpha-2} \|(g''){\p}^n\|_{\vb}\asymp \sup _{z \in D} \frac{(1-|z|^2)^\beta}{(1-|\p(z)|^2)^{\alpha-2}}|g''(z)|<\infty.
\end{equation}
  \end{theorem}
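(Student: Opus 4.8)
The plan is to reduce everything to the second derivative of $\jc f$, since $\jc f(0)=0$ and the two evaluations at the origin are harmless. Differentiating $(\jc f)(z)=\int_0^z f(\p(\xi))g'(\xi)\,d\xi$ gives $(\jc f)'(z)=f(\p(z))g'(z)$ and
\[
(\jc f)''(z)=f'(\p(z))\,\p'(z)g'(z)+f(\p(z))\,g''(z).
\]
Thus $\|\jc f\|_{\zb}$ is governed by $\sup_z(1-|z|^2)^\beta|(\jc f)''(z)|$, which splits into a first piece $T_1(z)=f'(\p(z))\p'(z)g'(z)$ driven by the growth of $f'$ and a second piece $T_2(z)=f(\p(z))g''(z)$ driven by the growth of $f$. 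The whole statement is organized so that $T_1$ yields the conditions on $g'\p'$ and $T_2$ the conditions on $g''$, with the weight dictated by Lemma 2.2: $|f'(\p)|$ is bounded, logarithmic, or of order $(1-|\p|^2)^{-(\alpha-1)}$ according as $\alpha<1$, $\alpha=1$, $\alpha>1$, while $|f(\p)|$ is bounded, logarithmic, or of order $(1-|\p|^2)^{-(\alpha-2)}$ according as $\alpha<2$, $\alpha=2$, $\alpha>2$.

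For sufficiency I would, for arbitrary $f\in\za$, bound $(1-|z|^2)^\beta|T_1|$ and $(1-|z|^2)^\beta|T_2|$ separately by inserting the pointwise estimates of Lemma 2.2 for $f'(\p(z))$ and $f(\p(z))$. In each regime the two supremum-conditions of the theorem are exactly what is needed to dominate these two quantities by $C\|f\|_{\za}$, since the weights appearing in them absorb precisely the growth permitted by Lemma 2.2; the endpoint terms $\jc f(0)=0$ and $(\jc f)'(0)=f(\p(0))g'(0)$ are controlled by Lemma 2.2 directly. Summing gives $\|\jc f\|_{\zb}\le C\|f\|_{\za}$, which settles the converse in every case.

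The substance is the necessity. The unweighted requirements come from constant and linear test functions: $f\equiv1$ kills $T_1$ and yields $(1-|z|^2)^\beta|g''(z)|<\infty$, i.e. $g''\in\hvb$, after which $f(z)=z$ together with $|\p|<1$ forces $g'\p'\in\hvb$. To capture the logarithmic and power-weighted conditions I would substitute the normalized families at $a=\p(z)$ with $|\p(z)|>\tfrac12$: the function $f_a$ of (9) and the pure power $(1-|a|^2)^2(1-\overline a z)^{-\alpha}$ for the power weights, the function $k_a$ of (12) for $\alpha=1$, and $\log\frac{2}{1-\overline a z}$ for $\alpha=2$. In each regime one of $T_1,T_2$ carries the target growth at $a=\p(z)$ while the other stays controlled by an already-proved condition, so the reverse triangle inequality isolates the target term; passing from the pointwise supremum to the sequential forms $\sup_n(n+1)^s\|\cdot\,\p^n\|_{\vb}$ and $\sup_n(\log n)\|\cdot\,\p^n\|_{\vb}$ is then exactly (2), (3) and Lemma 2.4.

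The hard case, and the main obstacle, is $\alpha>2$: a single power test function $f_{\p(z)}$ makes \emph{both} pieces blow up — $T_1$ at order $(1-|\p|^2)^{-(\alpha-1)}$ and $T_2$ at order $(1-|\p|^2)^{-(\alpha-2)}$ — so neither can be extracted by a bare reverse triangle inequality. Following the device used for $\cj$ in Theorem 2.3(v), I would test with the two independent power functions $f_{\p(z)}$ and $(1-|\p(z)|^2)^2(1-\overline{\p(z)}z)^{-\alpha}$, producing two bounded quantities $S_1,S_2$ that are two different linear combinations of
\[
X(z)=\frac{(1-|z|^2)^\beta}{(1-|\p(z)|^2)^{\alpha-1}}|\p'(z)g'(z)|,\qquad
Y(z)=\frac{(1-|z|^2)^\beta}{(1-|\p(z)|^2)^{\alpha-2}}|g''(z)|.
\]
The crux is to verify that the resulting $2\times2$ coefficient matrix is nonsingular with determinant a nonzero constant independent of $z$, so that inverting the system (its entries stay bounded because $|\p(z)|>\tfrac12$) bounds $X$ and $Y$ \emph{separately}; this delivers the power condition on $g'\p'$ and the condition on $g''$ simultaneously. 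Lemma 2.4 with (2) then converts both to their sequential forms, and the converse is handled as in part (i). The same two-function idea, with the logarithmic weight replacing one power, is what also produces the clean separation at $\alpha=2$.
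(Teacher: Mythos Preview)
Your proposal is correct and follows essentially the same route as the paper, which omits the proof of this theorem and refers back to the argument for $C_\varphi U_g$ in Theorem~2.3: compute $(\jc f)''=f'(\p)\p'g'+f(\p)g''$, use Lemma~2.2 for sufficiency, and for necessity feed in $f\equiv1$, $f(z)=z$, and then the families $f_a$, $k_a$, $\log\frac{2}{1-\bar az}$, and $(1-|a|^2)^2(1-\bar az)^{-\alpha}$ in the appropriate ranges, with two test functions in the $\alpha>2$ case. One small wording point: in case~(v) the bounded quantities $S_1,S_2$ are moduli of linear combinations of the \emph{complex} numbers $P,Q$ with $|P|=X$, $|Q|=Y$, not linear combinations of $X,Y$ themselves; the inversion still works because $P$ and $Q$ can be solved for as linear combinations of the two complex expressions and then the triangle inequality gives $|P|,|Q|\le C(S_1+S_2)$.
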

The proof is similar to that of Theorem 2.3, the details are omitted.
\section{Essential norms}

  In this section we estimate  the essential norms  of these integral-type  operators on~$Zygmund$~ type spaces in terms of ~$g,\varphi$, their derivatives and the n-th power ~$\varphi^n$ of ~$\varphi$.

   Let $\widetilde{\za}=\{f\in\za :f(0)=f'(0)=0\}$ and ~$\widetilde{\ba}=\{f\in \ba :f(0)=0\}$.
  We note that every compact operator ~$T\in \mathbf{K}(\widetilde{\za},\zb) $ can be extended to a compact operator $K \in \mathbf{K}(\za,\zb)$. In fact, for every ~$f\in \za$,~$f-f(0)-f'(0)z \in \widetilde{\za}$, and we can define ~$K(f)=T(f-f(0)-f'(0)z)+f(0)+f'(0))z$.

For $r \in (0,1)$, we consider the compact operator ~$K_r:\za\rightarrow\zb$~ defined by ~$K_rf(z)=f(rz)$.
\begin{lemma}
If ~$X(\ic,\ci,\cj,\jc)$~ is a bounded operator from $\za$ to $\zb$ space, then $$\|X\|_{e,\za\rightarrow\zb}= \|X\|_{e,\widetilde{\za}\rightarrow\zb}.$$
\end{lemma}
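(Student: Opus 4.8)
The plan is to exploit the splitting of $\za$ as the direct sum of $\widetilde{\za}$ and the two–dimensional subspace spanned by the functions $1$ and $z$, together with the observation that the natural projection onto $\widetilde{\za}$ is a contraction. Throughout write $X$ for whichever of $\ic,\ci,\cj,\jc$ is under consideration, set $Y=X|_{\widetilde{\za}}$ for its restriction, and define $P:\za\to\widetilde{\za}$ by $Pf=f-f(0)-f'(0)z$. First I would record that $\widetilde{\za}$ is a closed subspace of $\za$: by Lemma 2.2 the evaluations $f\mapsto f(0)$ and $f\mapsto f'(0)$ are bounded linear functionals on $\za$, so $\widetilde{\za}=\ker(f\mapsto f(0))\cap\ker(f\mapsto f'(0))$ is closed and everything below is well posed.

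One inequality is immediate. Given any compact $K\in\mathbf{K}(\za,\zb)$, its restriction $K|_{\widetilde{\za}}$ is compact from $\widetilde{\za}$ to $\zb$, and since the unit ball of $\widetilde{\za}$ is contained in that of $\za$ we have $\|Y-K|_{\widetilde{\za}}\|_{\widetilde{\za}\to\zb}\le\|X-K\|_{\za\to\zb}$. Taking the infimum over all compact $K$ yields $\|X\|_{e,\widetilde{\za}\to\zb}\le\|X\|_{e,\za\to\zb}$.

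For the reverse inequality I would first write down the decomposition $X=Y\circ P+F$, where $Ff=f(0)\,X(1)+f'(0)\,X(z)$; indeed for $f\in\za$ one has $Pf+f(0)+f'(0)z=f$, and $X(Pf)=Y(Pf)$ because $Pf\in\widetilde{\za}$. The operator $F$ has rank at most two and is therefore compact, so by the invariance of the essential norm under compact perturbations, $\|X\|_{e,\za\to\zb}=\|Y\circ P\|_{e,\za\to\zb}$. The crucial point is that $P$ is a contraction: for $f\in\za$ one computes $(Pf)(0)=(Pf)'(0)=0$ and $(Pf)''=f''$, so that $\|Pf\|_{\za}=\sup_{z\in D}(1-|z|^2)^\alpha|f''(z)|\le\fz$, i.e. $\|P\|_{\za\to\widetilde{\za}}\le1$. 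Now for any compact $L\in\mathbf{K}(\widetilde{\za},\zb)$ the operator $L\circ P$ is compact on $\za$, and since $P$ takes values in $\widetilde{\za}$ we get $Y\circ P-L\circ P=(Y-L)\circ P$, whence $\|Y\circ P-L\circ P\|_{\za\to\zb}\le\|Y-L\|_{\widetilde{\za}\to\zb}\,\|P\|\le\|Y-L\|_{\widetilde{\za}\to\zb}$. Taking the infimum over $L$ gives $\|X\|_{e,\za\to\zb}=\|Y\circ P\|_{e,\za\to\zb}\le\|X\|_{e,\widetilde{\za}\to\zb}$, and combining the two inequalities completes the proof.

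The step I expect to be the real obstacle, and the one that makes the identity exact rather than merely up to a constant, is the contraction estimate $\|P\|\le1$: without it the reverse direction would only deliver $\|X\|_{e,\za\to\zb}\le\|P\|\,\|X\|_{e,\widetilde{\za}\to\zb}$, and equality could fail. Everything else reduces to the definition of the essential norm as the distance to $\mathbf{K}(\cdot,\cdot)$ and the elementary facts that restrictions of compact operators are compact and that $L\circ P$ is compact whenever $L$ is.
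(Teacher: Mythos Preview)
Your argument is correct and rests on the same splitting $\za=\widetilde{\za}\oplus\operatorname{span}\{1,z\}$ and the same contraction estimate $\|P\|\le1$ for the projection $Pf=f-f(0)-f'(0)z$ that the paper uses. The only difference is cosmetic: where the paper handles the two-dimensional piece $\mathbf{A}=\operatorname{span}\{1,z\}$ via the dilation approximants $K_{r_n}f(z)=f(r_nz)$ and the limit $\|X(I-K_{r_n})\|_{\mathbf{A}\to\zb}\to0$, you observe more directly that $Ff=f(0)\,X(1)+f'(0)\,X(z)$ has finite rank and is therefore compact, which shortens that step.
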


\begin{proof}
Clearly $\|X\|_{e,\za\rightarrow\zb}\geq \|X\|_{e,\widetilde{\za}\rightarrow\zb}$.
Then we prove~$\|X\|_{e,\za\rightarrow\zb}\leq\|X\|_{e,\widetilde{\za}\rightarrow\zb}$.

Let~$T\in K(\za,\zb)$ be given. Let~$\{r_n\}$~be an increasing sequence in ~$(0,1)$ converging to ~$1$ and $\mathbf{A}=\{h|h=a+bz\}$, the closed subspace of ~$\za$. Then
\begin{eqnarray*}
\|X-T\|_{\za\rightarrow\zb} &=&\sup_{f \in \za,\|f\|_{\za}\leq1}\|X(f)-T(f)\|_{\za\rightarrow\zb}\\
&\leq&\sup_{f \in \za,\|f\|_{\za}\leq1}\|X(f-f(0)-f'(0)z)-T(f-f(0)-f'(0)z)\|_{\za\rightarrow\zb}\\
&+&\sup_{f \in \za,\|f\|_{\za}\leq1}\|X(f(0)+f'(0)z)-T(f(0)+f'(0)z)\|_{\za\rightarrow\zb}\\
&\leq&\sup_{g \in \widetilde{\za},\|g\|_{\za}\leq1}\|X(g)-T|_{\widetilde{\za}}(g)\|_{\za\rightarrow\zb}+\sup_{h \in \a,\|h\|_{\a}\leq1}\|X(h)-T|_{\a}(h)\|_{\za\rightarrow\zb}.\\
\end{eqnarray*}
Hence
\begin{eqnarray*}
\inf_{T\in K(\za,\zb)}\|X-T\|_{\za\rightarrow\zb} &\leq&\inf_{T\in K(\za,\zb)}\|X-T|_{\widetilde{\za}}\|_{\za\rightarrow\zb}
+\inf_{T\in K(\za,\zb)}\|X-T|_{\a}\|_{\za\rightarrow\zb}\\
&\leq&\|X\|_{e,\widetilde{\za}\rightarrow\zb}+\displaystyle\lim_{n\rightarrow\infty}\|X(I-K_{r_n})\|_{\a\rightarrow\zb}.\\
\end{eqnarray*}
Since $\displaystyle\lim_{n\rightarrow\infty}\|X(I-K_{r_n})\|_{\a\rightarrow\zb}=0$,
we have, $\|X\|_{e,\za\rightarrow\zb}\leq\|X\|_{e,\widetilde{\za}\rightarrow\zb}$,
and the proof is finished .

\end{proof}

Let ~$D_\alpha:\za\rightarrow \B_\alpha$~ and ~$\sa:\ba\rightarrow \hva$~ be the derivative operators. Then clearly ~$\da$~and~$\sa$ are linear isometries on ~$\widetilde{\za}$ and ~$\widetilde{\ba}$,  respectively and~$$S_\beta D_\beta\ic \da^{-1}\sa^{-1}=g(\p')\cp+g'\cp \sa^{-1},$$
Therefore
\begin{equation}
\|\ic\|_{e,\widetilde{\za}\rightarrow \zb}\leq \|g'\cp\|_{e,\widetilde{\ba}\rightarrow \hvb}+\|g(\p')\cp\|_{e,\hva\rightarrow \hvb}.
\end{equation}
Similarly, we have

 $\sb\db\ci \da^{-1}\sa^{-1}=(g\circ\p(\p')^2)\cp+(g'\circ\p(\p')^2+g\circ\p(\p''))\cp \sa^{-1},$
\begin{equation}
\|\ci\|_{e,\widetilde{\za}\rightarrow \zb}\leq \|((g'\circ\p(\p')^2))\cp\|_{e,\hva\rightarrow \hvb}+\|((g'\circ\p)(\p')^2+g\circ\p(\p''))\cp\|_{e,\widetilde{\ba}\rightarrow \hvb}.
\end{equation}
$\sb\db\cj \da^{-1}\sa^{-1}=(g''\circ\p(\p')^2+g'\circ\p(\p''))\cp \da^{-1}\sa^{-1}+(g'\circ\p(\p')^2)\cp \sa^{-1},$
\begin{equation}
\|\cj\|_{e,\widetilde{\za}\rightarrow \zb}\leq \|((g'\circ\p)(\p')^2)\cp\|_{e,\widetilde{\ba}\rightarrow \hvb}+\|((g''\circ\p)(\p')^2+g'\circ\p(\p''))\cp\|_{e,\widetilde{\za}\rightarrow \hvb}.
\end{equation}
$\sb\db\jc \da^{-1}\sa^{-1}=g''\cp \da^{-1}\sa^{-1}+g'(\p')\cp \sa^{-1},$
\begin{equation}
\|\jc\|_{e,\widetilde{\za}\rightarrow \zb}\leq \|g'(\p')\|_{e,\widetilde{\ba}\rightarrow \hvb}+\|g''\cp\|_{e,\widetilde{\za}\rightarrow \hvb}.
\end{equation}
\begin{theorem}
  Let $\varphi$ be an analytic self-map of $D$  and~$g\in H(D)$, and $\ic:\za \rightarrow \zb $~is a bounded operator.

  (i)~~If ~$0<\alpha<1$, then
  \begin{equation}
\|\ic\|_{e,\za\rightarrow \zb}\asymp \displaystyle\limsup_{n\rightarrow\infty}(n+1)^\alpha\|(g\p'){\p}^n\|_{\vb}.
\end{equation}

 (ii)~~If ~$\alpha=1$, then
  \begin{equation}
\|\ic\|_{e,\z\rightarrow \zb}\asymp \max\{\displaystyle\limsup_{n\rightarrow\infty}(n+1)\|(g\p'){\p}^n\|_{\vb},\displaystyle\limsup_{n\rightarrow\infty}(\log n)\|(g'){\p}^n\|_{\vb}\}.
\end{equation}

(iii)~~If ~$\alpha>1$, then
  \begin{equation}
\|\ic\|_{e,\za\rightarrow \zb}\asymp \max\{\displaystyle\limsup_{n\rightarrow\infty}(n+1)^\alpha\|(g\p'){\p}^n\|_{\vb},\displaystyle\limsup_{n\rightarrow\infty}( n+1)^{\alpha-1}\|(g'){\p}^n\|_{\vb}\}.
\end{equation}

  \end{theorem}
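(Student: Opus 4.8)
The goal is the two-sided estimate $\|\ic\|_{e,\za\to\zb}\asymp\max\{A^*,B^*\}$. Throughout, $A^*$ abbreviates $\limsup_{n\to\infty}(n+1)^\alpha\|g\p'\p^n\|_{\vb}$, which by Lemmas 2.4(ii) and 2.5 is comparable to $\limsup_{|\p(z)|\to1}\frac{(1-|z|^2)^\beta}{(1-|\p(z)|^2)^\alpha}|g(z)\p'(z)|$, while $B^*$ abbreviates the analogous $g'$-quantity: $\limsup_{n\to\infty}(\log n)\|g'\p^n\|_{\vb}$ when $\alpha=1$ and $\limsup_{n\to\infty}(n+1)^{\alpha-1}\|g'\p^n\|_{\vb}$ when $\alpha>1$, with $B^*$ absent when $0<\alpha<1$. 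By Lemma 3.1 I may work over $\widetilde{\za}$ instead of $\za$. The driving identity, already recorded in the proof of Theorem 2.1, is $(\ic f)''=g\p'\,(f''\circ\p)+g'\,(f'\circ\p)$: its first summand carries the $g\p'$ data and its second the $g'$ data, which is exactly the split seen in the claimed answer.

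For the upper bound I start from the factorization $\sb\db\ic\da^{-1}\sa^{-1}=g(\p')\cp+g'\cp\sa^{-1}$ established before the theorem, which, after conjugating by the isometries $\da,\sa$ and using invariance of the essential norm, gives
$$\|\ic\|_{e,\widetilde{\za}\to\zb}\le\|g\p'\cp\|_{e,\hva\to\hvb}+\|g'\cp\|_{e,\widetilde{\ba}\to\hvb}.$$
The first term is a weighted composition operator between weighted sup-norm spaces, and Lemma 2.4(ii) (with $v=\va,\ w=\vb,\ \widetilde{\va}=\va$) together with $\|z^n\|_{\va}\asymp(n+1)^{-\alpha}$ from Lemma 2.5 identifies it with $A^*$. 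The second term is a composition operator from the Bloch-type space $\ba$ into $\hvb$; since $f\in\ba\iff f'\in\hva$, the Bloch analogue of Lemma 2.3 applies (the same statement with critical exponent shifted from $2$ to $1$), giving essential norm $0$ for $0<\alpha<1$, comparable to $\limsup_n(\log n)\|g'\p^n\|_{\vb}$ for $\alpha=1$, and to $\limsup_n(n+1)^{\alpha-1}\|g'\p^n\|_{\vb}$ for $\alpha>1$, i.e. exactly $B^*$. Since $a+b\asymp\max\{a,b\}$, this yields $\|\ic\|_{e,\za\to\zb}\lesssim\max\{A^*,B^*\}$ in all three cases.

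For the lower bound I test $\ic$ against bounded families of functions that converge to $0$ uniformly on compact subsets of $\D$; because any compact $K:\za\to\zb$ maps such a family to a norm-null one, each family gives $\|\ic\|_{e,\za\to\zb}\ge\limsup\|\ic f\|_{\zb}/\|f\|_{\za}$. To reach $A^*$ I use the functions $g_a$ from the proof of Theorem 2.1, which are uniformly bounded in $\za$ and satisfy $g_a'(a)=0$, $g_a''(a)=\alpha/(1-|a|^2)^\alpha$; setting $a=\p(z)$ annihilates the $g'$-summand and leaves precisely $(1-|z|^2)^\beta|g(z)\p'(z)|\,\alpha/(1-|\p(z)|^2)^\alpha$, so $\|\ic\|_{e,\za\to\zb}\gtrsim A^*$ cleanly. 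To reach $B^*$ (cases $\alpha=1$ and $\alpha>1$) I use $k_a$ respectively $f_a$ from Theorem 2.1, whose first derivatives at $a$ grow like $\log\frac1{1-|a|^2}$ respectively $(1-|a|^2)^{-(\alpha-1)}$ while their second derivatives are only of size $(1-|a|^2)^{-\alpha}$; exactly as in the proof of Theorem 2.1(ii),(iii), the second derivative of $\ic k_{\p(z)}$ (resp. $\ic f_{\p(z)}$) at $z$ produces the $B$-quantity minus a constant multiple of the $A$-quantity.

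The main obstacle is precisely this subtraction. The test functions realizing $B^*$ cannot be made to kill $f''(\p(z))$, so the $g\p'$-summand contaminates the estimate and I cannot read off $B^*$ directly. I would remove it by absorption: choosing $z_j$ with $|\p(z_j)|\to1$ along which the $B$-quantity tends to $B^*$ and, after passing to a subsequence, the $A$-quantity tends to some $L\le A^*$, the test-function bound gives $\|\ic\|_{e,\za\to\zb}\gtrsim B^*-CL$; combined with the already-established $\|\ic\|_{e,\za\to\zb}\gtrsim A^*\ge L$, rearranging yields $\|\ic\|_{e,\za\to\zb}\gtrsim B^*$, hence $\gtrsim\max\{A^*,B^*\}$. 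Matching this lower bound with the upper bound of the second paragraph proves the theorem; in each case the passage between the $\limsup_{|\p(z)|\to1}$ form and the $\limsup_n$ form is furnished by Lemmas 2.4 and 2.5.
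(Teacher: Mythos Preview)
Your proposal is correct and follows essentially the same route as the paper: the upper bound via the isometric factorization \(\sb\db\ic\da^{-1}\sa^{-1}=g\p'\cp+g'\cp\sa^{-1}\) combined with the known essential-norm formulas for weighted composition operators into \(\hvb\), and the lower bound via the test families from Theorem~2.1 (\(g_a\) to isolate \(A^*\), then \(k_a\) or \(f_a\) for \(B^*\) with the subtraction absorbed by the already-established \(A^*\) bound). The only cosmetic differences are that the paper cites external results (Theorem~3.4 in \cite{ste5}) rather than your ``Bloch analogue of Lemma~2.3'' for \(\|g'\cp\|_{e,\ba\to\hvb}\), and in case \(\alpha=1\) it writes out slightly different explicit test functions \(h_n,f_n\) (equations (36)--(37)) in place of \(g_a,k_a\), but these serve exactly the same purpose.
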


\begin{proof}
(i) We start with the upper bound. First we show that $g'C_{\p}$ is a compact weighted composition operator for $\ba$ into $\hvb$.
Suppose that ~$\{f_n\}$~ is bounded sequence in ~$\ba$. From Lemma 3.6 in \cite{osk},  $\{f_n\}$ has a subsequence ~$\{f_{n_k}\}$  which converges uniformly on ~$D$ ~to a function, which we can assume to be  identically zero. Then it follows  from Theorem 2.1 and Lemma 2.1 that
 ~$$\displaystyle\lim_{k\rightarrow\infty}\sup_{z \in D}(1-|z|^2)^\beta|g'(z)||f_{n_k}(\p(z))|\leq C \displaystyle\lim_{k\rightarrow\infty}\sup_{z \in D}|f_{n_k}(z)|=0,$$~
which shows that ~$g'C_{\p}: \ba\to \hvb$~ is a compact operator and ~$\|g'\cp\|_{e,\ba\rightarrow \hvb}=0$.
Applying (29), Lemma 2.5 and 3.1, we get that
 \begin{eqnarray*}
 \|\ic\|_{e,\za\rightarrow \zb}\leq \|g(\p')\cp\|_{e,\hva\rightarrow \hvb}&=&\displaystyle\limsup_{|\p(z)|\rightarrow 1}\frac{(1-|z|^2)^\beta}{(1-|\p(z)|^2)^\alpha}|g(z)\p'(z)|\\
&=& \displaystyle\limsup_{n\rightarrow\infty}\frac{\|(g\p'){\p}^n \|_{\vb}}{\|z^n\|_{\va}}\\
&=&(\frac{e}{2\alpha})^\alpha\displaystyle\limsup_{n\rightarrow\infty}(n+1)^\alpha\|(g\p'){\p}^n\|_{\vb}.\\
\end{eqnarray*}

For the lower bound, let ~$\{z_n\}\subseteq D$  with ~$|\p(z_n)|>\frac{1}{2}$ and ~$|\p(z_n)|\rightarrow1$~ as ~$n\rightarrow\infty.$ Taking ~$g_n=g_{\p(z_n)}$~ defined in (11),  we obtain that ~$\{g_n\}$~ is bounded sequence in ~$\zao$~ converging to ~$0$~ uniformly on compact subset of ~$D$~ and ~$\displaystyle\sup_{n \in N}\|g_n\|_{\za}\leq C$. For every compact operator ~$T:\za\rightarrow \zb$~,
\begin{eqnarray*}
C\|\ic-T\|_{\za\rightarrow \zb}&\geq&\displaystyle\limsup_{n\rightarrow\infty}\|\ic g_n\|_{\zb}-\displaystyle\limsup_{n\rightarrow\infty}\|T(g_n)\|_{\zb}\\
&\geq&\alpha\displaystyle\limsup_{n\rightarrow\infty}(1-|z_n|^2)^\beta\frac{|g(z_n)\p'(z_n)|}{(1-|\p(z_n)|^2)^\alpha}.\\
\end{eqnarray*}
Now we   use (2) and Lemma 2.4 to obtain that
\begin{eqnarray*}
\|\ic\|_{e,\za\rightarrow \zb}&\geq&\|\ic-T\|_{\za\rightarrow \zb}
\geq\frac{\alpha}{C}\displaystyle\limsup_{|\p(z_n)|\rightarrow 1}|g(z_n)\p'(z_n)|\frac{(1-|z_n|^2)^\beta}{(1-|\p(z_n)|^2)^\alpha}\\
&=&\frac{\alpha}{C}\displaystyle\limsup_{n\rightarrow\infty}\frac{\|(g\p'){\p}^n\|_{\vb}}{\|z^n\|_{\va}}\\
&=&C(\frac{e}{2\alpha})^\alpha\displaystyle\limsup_{n\rightarrow\infty}(n+1)^\alpha\|(g\p'){\p}^n\|_{\vb}.\\
\end{eqnarray*}
Hence (33) holds.

(ii)  The boundedness of $\ic $ guarantees that $(g\p')\cp:\hv\rightarrow\hvb$ and $(g')\cp:\B\rightarrow\hvb$
are bounded weighted composition operators.
Theorem  3.4 in $\cite{ste5}$ ensures that  $$\|g'\cp\|_{e,\B\rightarrow \hvb}\asymp \displaystyle\lim_{|\p(z)|\rightarrow1}(1-|z|^2)^\beta|g'(z)|\log\frac{2}{1-|\p(z)|^2}.$$
  Now we use  Lemma 2.4, 2.5 and (29) to conclude that
\begin{eqnarray*}
\|\ic\|_{e,\z\rightarrow \zb} &\leq& \|g'\cp\|_{e,\widetilde{\B}\rightarrow \hvb}+\|g(\p')\cp\|_{e,\hv\rightarrow \hvb}\\
&\leq& C\displaystyle\limsup_{n\rightarrow\infty}\frac{\|(g\p'){\p}^n \|_{\vb}}{\|z^n\|_{v_1}}+C\displaystyle\limsup_{n\rightarrow\infty}\frac{\|g'{\p}^n \|_{\vb}}{\|z^n\|_{v_{\log}}}\\
&=& C\displaystyle\limsup_{n\rightarrow\infty}(\log n)\|g'{\p}^n \|_{\vb}+\frac{Ce}{2}\displaystyle\limsup_{n\rightarrow\infty}( n+1)\|(g\p'){\p}^n \|_{\vb}\\
&\leq&C\max\{\displaystyle\limsup_{n\rightarrow\infty}(n+1)\|(g\p'){\p}^n\|_{\vb},\displaystyle\limsup_{n\rightarrow\infty}(\log n)\|(g'){\p}^n\|_{\vb}\}.
\end{eqnarray*}

  On the other hand, let ~$\{z_n\}$ be a sequence in ~$ D$ such that ~$|\p(z_n)|>\frac{1}{2}$ and $|\p(z_n)|\rightarrow 1$ as $n\rightarrow\infty$.
 Given
 \begin{equation}
 h_n(z)=\frac{h(\overline{\p(z_n)}z)}{\overline{\p(z_n)}}(\log\frac{2}{1-|\p(z_n)|^2})^{-1}-(\int_0^z\log^3\frac{2}{1-\overline{\p(z_n)}\omega}d\omega)
 (\log\frac{2}{1-|\p(z_n)|^2})^{-2},
\end{equation}
where ~$h(z)=(z-1)((1+\log\frac{2}{1-z})^2+1)$. From \cite{li3} we know that
~$\{h_n\}$ is a bounded sequence in ~$\z_1^0$ which converges to zero uniformly on compact subsets of ~$D$, and
$$h_n''(\p(z_n))=\frac{-\overline{\p(z_n)}}{1-|\p(z_n)|^2}, ~~~~~~h_n'(\p(z_n))=0, ~~~~~~\sup_n\|h_n\|_{\z}<+\infty.~~~~~$$
For every compact operator~$T:\z\rightarrow \zb$~, we have $\|T(h_n)\|_{\zb}\rightarrow0$ as ~$n\rightarrow\infty$.  By Lemma 2.4 and 2.5, we obtain that
\begin{eqnarray*}
C\|\ic-T\|_{\z\rightarrow\zb}&\geq&\|\ic h_n\|_{\z\rightarrow\zb}
\geq\displaystyle\limsup_{n\rightarrow\infty}(1-|z_n|^2)^\beta||g(z_n)\p'(z_n)|\frac{|\p(z_n)|}{1-|\p(z_n)|^2}\\
&\geq&\displaystyle\limsup_{n\rightarrow\infty}\frac{\|(g\p'){\p}^n\|_{\vb}}{\|z^n\|_{v_1}}\\
&=&\frac{e}{2}\displaystyle\limsup_{n\rightarrow\infty}(n+1)\|(g\p'){\p}^n\|_{\vb}.\\
\end{eqnarray*}
Now we take another function
\begin{equation}
~f_n=\frac{h(\overline{\p(z_n)}z)}{\overline{\p(z_n)}}(\log\frac{2}{1-|\p(z_n)|^2})^{-1}.
\end{equation}
From \cite{li3} we know that
$\{f_n\}$~ is a bounded sequence in ~$\z_1^0$ which converges to zero uniformly on compact subsets of ~$D$, and $\displaystyle\sup_{n\geq1}\|f_n\|_{\z}<+\infty$. It follows from Lemma 2.4 and 2.5 that
\begin{eqnarray*}
C\|\ic\|_{e,\z\rightarrow\zb}&\geq&\displaystyle\lim_{n\rightarrow\infty}\sup\|\ic f_n\|_{\zb}
\geq\displaystyle\limsup_{n\rightarrow\infty}(1-|z_n|^2)^\beta|g'(z_n)|\log\frac{2}{1-|\p(z_n)|^2}\\
&-&\displaystyle\limsup_{n\rightarrow\infty}(1-|z_n|^2)^\beta|(g(z_n)\p'(z_n)|\frac{|\p(z_n)|}{1-|\p(z_n)|^2}.
\end{eqnarray*}
 Noting that $\displaystyle\limsup_{n\rightarrow\infty}(n+1)\|(g\p'){\p}^n\|_{\vb}\leq \frac{2C}{e}\|\ic\|_{e,\z\rightarrow\zb}$, we obtain
\begin{eqnarray*}
(C+\frac{2C}{e})\|\ic\|_{e,\z\rightarrow\zb}&\geq&\displaystyle\limsup_{|\p(z_n)|\rightarrow 1}(1-|z_n|^2)^\beta|g'(z_n)|\log\frac{2}{1-|\p(z_n)|^2}\\
&=&\displaystyle\limsup_{n\rightarrow\infty}(\log n)\|(g'){\p}^n\|_{\vb}.
\end{eqnarray*}
Hence we have~$\|\ic\|_{e,\z\rightarrow\zb}\geq C\max\{\displaystyle\limsup_{n\rightarrow\infty}(n+1)\|(g\p'){\p}^n\|_{\vb},\displaystyle\limsup_{n\rightarrow\infty}(\log n)\|(g'){\p}^n\|_{\vb}\}.$

(iii) Let ~$\alpha>1$.  The proof of the upper bound is similar to that of (ii).
From  the proof of (i), we get that for some constant~$C$~,

\begin{eqnarray}
C\|\ic\|_{e,\za\rightarrow\zb}\geq\displaystyle\limsup_{|\p(z)|\rightarrow 1}|g(z)\p'(z)|\frac{(1-|z_n|^2)^\beta}{(1-|\p(z)|^2)^\alpha}.
\end{eqnarray}

Now, let ~$\{z_n\}$~ be as before and note that the function ~$f_n=f_{\p(z_n)}$~ given in (9). Then ~$\{f_n\}$~ is bounded sequence in ~$\zao$~ converging to zero uniformly on compact subsets of ~$D$~ , therefore
\begin{eqnarray*}
C\|\ic\|_{e,\za\rightarrow \zb}&\geq&\displaystyle\lim_{n\rightarrow \infty}\|\ic(f_n)\|_{\zb}\\
&\geq&2\alpha\displaystyle\limsup_{n\rightarrow\infty}\frac{(1-|z_n|^2)^\beta}{(1-|\p(z_n)|^2)^{\alpha-1}}|g'(z_n)|
-\displaystyle\limsup_{n\rightarrow\infty}\frac{(1-|z_n|^2)^\beta|g(z_n)\p'(z_n)|}{|\p(z_n)|(1-|\p(z_n)|^2)^\alpha}.\\
\end{eqnarray*}
By (38), we have
\begin{eqnarray*}
C\|\ic\|_{e,\za\rightarrow \zb}&\geq&
\displaystyle\limsup_{n\rightarrow\infty}\frac{(1-|z_n|^2)^\beta}{(1-|\p(z_n)|^2)^{\alpha-1}}|g'(z_n)|,
\end{eqnarray*}
and the rest of the proof is similar to that of the previous, we omit it .

\end{proof}

\begin{theorem}
 Let $\varphi$ be an analytic self-map of $D$  and~$g\in H(D)$~, and ~$\cj:\za \rightarrow \zb $~is a bounded operator.

(i)~If  $0<\alpha<1$, then
\begin{equation}
\|\cj\|_{e,\za\rightarrow \zb}=0.
\end{equation}

(ii)~If $\alpha=1$, then
\begin{equation}
\|\cj\|_{e,\z\rightarrow \zb}\asymp \displaystyle\lim_{n\rightarrow\infty}\sup(\log n)\|((g'\circ\p)(\p')^2){\p}^n\|_{\vb}.
\end{equation}

(iii)~If~$1<\alpha<2$, then
\begin{equation}
\|\cj\|_{e,\za\rightarrow \zb}\asymp \displaystyle\lim_{n\rightarrow\infty}\sup( n+1)^{\alpha-1}\|((g'\circ\p)(\p')^2){\p}^n\|_{\vb}.
\end{equation}

(iv)~If~$\alpha=2$, then
\begin{equation}
\begin{aligned}
 \|\cj\|_{e,\z^2\rightarrow \zb}\asymp \max\{\displaystyle\lim_{n\rightarrow\infty}\sup( n+1)\|((g'\circ\p)(\p')^2){\p}^n\|_{\vb},\\
\displaystyle\lim_{n\rightarrow\infty}\sup(\log  n)\|((g''\circ\p)(\p')^2+(g'\circ\p)(\p'')){\p}^n\|_{\vb}\}.
\end{aligned}
 \end{equation}

(v)~ If~$\alpha>2$, then

\begin{equation}
\begin{aligned}
 \|\cj\|_{e,\za\rightarrow \zb}\asymp  \max\{\displaystyle\lim_{n\rightarrow\infty}\sup( n+1)^{\alpha-1}\|((g'\circ\p)(\p')^2){\p}^n\|_{\vb},\\
 \displaystyle\lim_{n\rightarrow\infty}\sup( n+1)^{\alpha-2}\|((g''\circ\p)(\p')^2+(g'\circ\p)(\p'')){\p}^n\|_{\vb}\}.
\end{aligned}
 \end{equation}

\end{theorem}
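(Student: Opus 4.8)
The plan is to push everything through the second derivative and turn the problem into two weighted composition essential norms. Differentiating twice gives
\[
(\cj f)''=\big((g'\circ\p)(\p')^2\big)\,(f'\circ\p)+\big((g''\circ\p)(\p')^2+(g'\circ\p)\p''\big)\,(f\circ\p),
\]
so, writing $u_2=(g'\circ\p)(\p')^2$ and $u_1=(g''\circ\p)(\p')^2+(g'\circ\p)\p''$, the operator splits into a part that sees only $f'$ and a part that sees only $f$. By Lemma 3.1 we may work on $\widetilde{\za}$, and after conjugating by the isometries $D,S$ this is exactly estimate (31):
\[
\|\cj\|_{e,\za\to\zb}\le\|u_2\cp\|_{e,\widetilde{\ba}\to\hvb}+\|u_1\cp\|_{e,\widetilde{\za}\to\hvb}.
\]
The whole argument then reduces to computing these two essential norms and recombining them.

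For the upper bound I would estimate the two pieces independently. The $u_1$-piece maps the \emph{Zygmund} space into $\hvb$, so Lemma 2.3 applies verbatim: it vanishes for $\alpha<2$, equals $\limsup_n(\log n)\|u_1\p^n\|_{\vb}$ at $\alpha=2$, and equals $\limsup_n(n+1)^{\alpha-2}\|u_1\p^n\|_{\vb}$ for $\alpha>2$ (up to constants). The $u_2$-piece maps the \emph{Bloch} space into $\hvb$ and is structurally identical to the $g'\cp$-piece already handled in Theorem 3.2: for $\alpha<1$ it is compact because $\ba\subset\H$ allows extraction of a uniformly convergent subsequence, at $\alpha=1$ Theorem 3.4 of \cite{ste5} gives $\limsup_n(\log n)\|u_2\p^n\|_{\vb}$, and for $\alpha>1$ the same scheme gives $\limsup_n(n+1)^{\alpha-1}\|u_2\p^n\|_{\vb}$; throughout, Lemma 2.5 converts the $\|z^n\|_{\va}$- and $\|z^n\|_{v_{\log}}$-normalizations into the stated powers of $(n+1)$ and $\log n$. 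Adding the two bounds and noting that the Bloch threshold sits at $\alpha=1$ while the Zygmund threshold sits at $\alpha=2$ produces exactly the five cases, the $\max$ appearing precisely when $\alpha\ge2$ and both pieces are simultaneously non-compact.

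For the lower bound I would, in each regime, feed $\cj$ a family $\{f_n\}$ bounded in $\za$ and tending to $0$ uniformly on compacta (so $\|Tf_n\|_{\zb}\to0$ for every compact $T$, whence $\|\cj\|_e\ge\limsup_n\|\cj f_n\|_{\zb}\ge\limsup_n(1-|z_n|^2)^\beta|(\cj f_n)''(z_n)|$), the family being tuned to isolate one of $u_1,u_2$. To detect $u_2$ I would use $f_{\p(z_n)}$ from (9) for $1<\alpha<2$ and $\alpha>2$, and the Bloch-logarithmic functions (37) at $\alpha=1$, arranging that $f'(\p(z_n))$ realises the critical growth $(1-|\p(z_n)|^2)^{1-\alpha}$, respectively $\log\frac{2}{1-|\p(z_n)|^2}$, while $f(\p(z_n))$ stays of lower order. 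To detect $u_1$ I would use the Zygmund-logarithmic function $\log\frac{2}{1-\overline{\p(z_n)}z}$ at $\alpha=2$ and the functions $\tfrac{(1-|\p(z_n)|^2)^2}{(1-\overline{\p(z_n)}z)^\alpha}$ for $\alpha>2$, which instead make $f(\p(z_n))$ dominate; in each case Lemma 2.4 together with Lemma 2.5 rewrites the resulting bound in the $\limsup_n$ form.

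I expect the genuine difficulty to lie in the cases $\alpha=2$ and $\alpha>2$, where neither piece is compact. There a single test function no longer cleanly isolates one term: the $u_2$- and $u_1$-contributions to $(\cj f)''(z_n)$ are of comparable size and the cross term must be removed with care (note in particular that $f_{\p(z_n)}$ from (9) has vanishing first value at $\alpha=2$, so the isolating family must be chosen differently there). I would handle this exactly as in Theorems 2.3 and 3.2, forming suitable linear combinations in the spirit of the $g_a=f_a-h_a$ construction of (11) to kill the unwanted derivative at $\p(z_n)$, and bounding the residual cross term by the complementary quantity already present in the $\max$. Producing the two independent lower bounds without either swamping the other, and matching them to the two-term upper bound, is the technical crux; once the two weighted-composition essential norms are secured, the remaining cases follow routinely.
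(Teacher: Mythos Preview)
Your proposal is correct and follows essentially the same route as the paper: split via the second-derivative identity and inequality (31), bound the $u_1$-piece by Lemma~2.3 and the $u_2$-piece by the Bloch-to-$\hvb$ essential-norm estimates, then obtain lower bounds from test families chosen along a sequence $|\p(z_n)|\to1$, handling the $\alpha\ge2$ cross term by first securing the $u_2$-bound and subtracting it. The only cosmetic differences are in the particular test functions---the paper uses the shifted family (44) with $g_n(\p(z_n))=0$ at $\alpha=1$ and the function $O_n(z)=\big(1+(\log\tfrac{2}{1-\overline{\p(z_n)}z})^2\big)\big(\log\tfrac{2}{1-|\p(z_n)|^2}\big)^{-1}$ at $\alpha=2$---but these serve the same isolating role you describe.
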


\begin{proof} (i)
 For the compactness of ~$((g'\circ\p)(\p')^2)\cp$~, the arguement is similar to the proof of Theorem 3.1(i), then we have ~$\|((g'\circ\p)(\p')^2)\cp\|_{e,\ba\rightarrow \hvb}=0$. Hence by (31) and Lemma 2.3, we get that ~$\|\cj\|_{e,\za\rightarrow\zb}=0.$

Next we will prove (ii). The boundedness of ~$\cj $~ guarantees that ~$((g'\circ\p)(\p')^2)\cp:\B\rightarrow\hvb$~ and ~$((g''\circ\p)(\p')^2+(g'\circ\p)(\p''))\cp:\z\rightarrow\hvb$~
are bounded weighted composition operators.
We know that if ~$\uc:\z\rightarrow \hvb$~ is a bounded operator, then ~$\uc$~ is a compact operator by  Lemma 2.3. Hence we consider the boundedness of ~$\cj:\z\rightarrow\zb $~ just consider that ~$((g'\circ\p)(\p')^2)\cp:\B\rightarrow\hvb$~ is a bounded operator.

Theorem 3.4 in~$(\mbox{\rm\cite{ste5}})$ ensures that ~$$\|((g'\circ\p)(\p')^2)\cp\|_{e,\B\rightarrow \hvb}\asymp \displaystyle\lim_{|\p|\rightarrow1}(1-|z|^2)^\beta|g'(\p(z))(\p'(z))^2|\log\frac{2}{1-|\p(z)|^2},$$~
From (31), Lemma 2.4 and 2.5,  we have
\begin{eqnarray*}
\|\cj\|_{e,\z\rightarrow\zb} &\leq& C\displaystyle\limsup_{n\rightarrow\infty}\frac{\|((g'\circ\p)(\p')^2){\p}^n \|_{\vb}}{\|z^n\|_{v_{\log}}}\\
\  & \\
&=& C\displaystyle\limsup_{n\rightarrow\infty}(\log n)\|((g'\circ\p)(\p')^2){\p}^n \|_{\vb}\\
\end{eqnarray*}

In order to prove ~$\|\cj\|_{e,\z\rightarrow \zb}\geq \displaystyle\limsup_{n\rightarrow\infty}(\log n)\|((g'\circ\p)(\p')^2\}$,
we take the function
\begin{equation}
\displaystyle
g_n(z)=\frac{\overline{\varphi(z_n)}z-1}{\overline{\varphi(z_n)}}
\big((1+\log\frac{1}{1-\overline{\varphi(z_n)}z})^2+1\big)(\log\frac{1}{1-|\varphi(z_n)|^2})^{-1}-a_n,
\end{equation}
where  $$\displaystyle a_n=
\frac{|\varphi(z_n)|^2-1}{\overline{\varphi(z_n)}}
\big((1+\log\frac{1}{1-|\varphi(z_n)|^2})^2+1\big)(\log\frac{1}{1-|\varphi(z_n)|^2})^{-1},$$
 and $\displaystyle\lim_{n\to\infty} a_n=0$. From \cite{ysl1} we obtain that ~$\{ g_n\}$~is a bounded sequence in ~$\z_1^0$~ which converges to zero uniformly on compact subsets of ~$D$. By a directly calculation, we have
 $$g_n(\p(z_n))= 0,
~~g'_n(\p(z_n))=\log\frac{1}{1-|\p(z_n)|^2}.$$
For every compact operator ~$T:\z\rightarrow \zb$,  we have $\|T(h_n)\|_{\zb}\rightarrow0$ as ~$n\rightarrow\infty$.  Let $M=\displaystyle\sup_{n\geq1}\|g_n\|_{\zb}$. It follows from Lemma 2.5 that
\begin{eqnarray*}
M\|\cj-T\|_{e,\z\rightarrow\zb}&\geq&\|\cj f_n\|_{e,\z\rightarrow\zb}\\
&\geq&\displaystyle\limsup_{n\rightarrow\infty}(1-|z_n|^2)^\beta|g'(\p(z_n))(\p'(z_n))^2|\log\frac{1}{1-|\p(z_n)|^2}\\
&\geq&\displaystyle\limsup_{n\rightarrow\infty}\frac{\|((g'\circ\p)(\p')^2){\p}^n\|_{\vb}}{\|z^n\|_v}\\
&\geq&\displaystyle\limsup_{n\rightarrow\infty}(\log n)\|((g'\circ\p)(\p')^2){\p}^n\|_{\vb}.\\
\end{eqnarray*}
This  completes the proof.

 The proof of (iii) is the same as that of Theorem 5(iii), we don't prove it again.

(iv) Let ~$\alpha=2$. Applying Lemma 2.3 (ii) and Theorem 3.2 in \cite{ste6}, we get that
\begin{eqnarray}
\|(g'\circ\p)(\p')^2\cp\|_{e,\B^2\rightarrow \hvb}\asymp \displaystyle\limsup_{n\to\infty} (n+1)\|(g'\circ\p)(\p')^2{\p}^n\|_{\vb}
\end{eqnarray}
and
\begin{equation}
\|((g''\circ\p)(\p')^2+(g'\circ\p)\p'')\cp\|_{e,\z_2\rightarrow \hvb}\asymp \displaystyle\limsup_{n\to\infty}(\log n)\|((g''\circ\p)(\p')^2+(g'\circ\p)\p''){\p}^n\|_{\vb},
\end{equation}
which  yields the upper bound by (31).

With the same arguements as in the proof of Theorem 2.3 and 3.1, for some constant ~$C$, we have
$$
C\|\cj\|_{e,\z^2\rightarrow\zb}\geq\displaystyle\limsup_{n\rightarrow\infty}(n+1)\|((g'\circ\p)(\p')^2){\p}^n\|_{\vb}.
$$

On the other hand, let ~$\{z_n\}\subseteq D$ ~with ~$|\p(z_n)|>\frac{1}{2}$~ and ~$|\p(z_n)|\rightarrow 1$~ as ~$n\rightarrow\infty.$  Let the test function ~$$O_n(z)=(1+(\log\frac{2}{1-\overline{\p(z_n)}z})^2)(\log\frac{2}{1-|\p(z_n)|^2})^{-1}. $$
From \cite{km} we obtain that ~$\{O_n\}$~ is a bounded sequence in ~$\z^2_0$~ which converges to zero uniformly on compact subsets of $D$, and
\begin{eqnarray*}
\displaystyle\lim_{n\rightarrow\infty}(1-|z_n|^2)^\beta|g'(\p(z_n))(\p'(z_n))^2||O'_n(\p(z_n))|
&=&2\displaystyle\lim_{n\rightarrow\infty}\frac{(1-|z_n|^2)^\beta}{1-|\p(z_n)|^2}|g'(\p(z_n))(\p'(z_n))^2||\p(z_n)|\\
&\leq&C\|\cj\|_{e,\z^2\rightarrow \zb}.
\end{eqnarray*}
Applying Theorem 2.3 we get
\begin{eqnarray*}
C\|\cj\|_{e,\z_2\rightarrow\zb}&\geq&\|\cj(O_n)\|_{\zb}\\
&\geq&\displaystyle\lim_{n\rightarrow\infty}(1-|z_n|^2)^\beta|g''(\p(z_n))(\p'(z_n))^2+g'(\p(z_n))\p''(z_n)||O_n(\p(z_n))|\\
&-&\displaystyle\lim_{n\rightarrow\infty}(1-|z_n|^2)^\beta|g'(\p(z_n))(\p'(z_n))^2||O'_n(\p(z_n))|\\
&\geq&\displaystyle\lim_{n\rightarrow\infty}(1-|z_n|^2)^\beta|g''(\p(z_n))(\p'(z_n))^2+g'(\p(z_n))\p''(z_n)|\log \frac{2}{1-|\p(z_n)|^2}\\
&-&2C\displaystyle\|\cj\|_{e,\z^2\rightarrow \zb}.
\end{eqnarray*}
Hence
$$\displaystyle\lim_{n\rightarrow\infty}(1-|z_n|^2)^\beta|g''(\p(z_n))(\p'(z_n))^2+g'(\p(z_n))\p''(z_n)|\log \frac{2}{1-|\p(z_n)|^2}\leq C\|\cj\|_{e,\z^2\rightarrow \zb}.$$
On the other hand,  the lower bound  can be easily proved by Lemma 2.4 and 2.5.

If ~$\alpha>2$, the proof is similar to that of (iv) except that we now choose the test function ~$t_n(z)=\displaystyle\frac{(1-|\p(z_n)|^2)^2}{(1-\overline{\p(z_n)}z)^\alpha}$~ instead of ~$O_n(z)$. This completes the proof of Theorem 3.2.

\end{proof}
Using the same methods of Theorem 3.1 and 3.2, we can have the following results.

\begin{theorem}
 Let $\varphi$ be an analytic self-map of $D$  and~$g\in H(D)$, and ~$\ci:\za \rightarrow \zb $~is a bounded operator.

 (i) If ~$0<\alpha<1$~, then
 \begin{equation}
\|\ci\|_{e,\za\rightarrow \zb}\asymp \displaystyle\limsup_{n\rightarrow\infty}(n+1)^\alpha\|((g\circ\p)(\p')^2){\p}^n\|_{\vb}.
\end{equation}

 (ii) If ~$\alpha=1$, then
 \begin{equation}
 \begin{aligned}
\|\ci\|_{e,\z\rightarrow \zb}\asymp \max\{\displaystyle\limsup_{n\rightarrow\infty}(n+1)\|((g\circ\p)(\p')^2){\p}^n\|_{\vb},\\
\displaystyle\limsup_{n\rightarrow\infty}(\log n)\|((g'\circ\p)(\p')^2+(g\circ\p)(\p'')){\p}^n\|_{\vb}\}.
\end{aligned}
\end{equation}

(iii)~~If ~$\alpha>1$~, then
\begin{equation}
\begin{aligned}
\|\ci\|_{e,\za\rightarrow \zb}\asymp \max\{\displaystyle\limsup_{n\rightarrow\infty}(n+1)^\alpha\|((g\circ\p)(\p')^2){\p}^n\|_{\vb},\\
\displaystyle\limsup_{n\rightarrow\infty}( n+1)^{\alpha-1}\|((g'\circ\p)(\p')^2+(g\circ\p)(\p'')){\p}^n\|_{\vb}\}.
\end{aligned}
\end{equation}
  \end{theorem}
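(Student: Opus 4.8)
The plan is to follow the proofs of Theorems 3.1 and 3.2 almost verbatim, since $\ci$ has the same two--term structure. By Lemma 3.1 it suffices to compute $\|\ci\|_{e,\widetilde{\za}\rightarrow\zb}$. Using the derivative isometries $\da,\sa$ on $\widetilde{\za},\widetilde{\ba}$ (and $\db,\sb$ on the target) together with the identity $\sb\db\ci\da^{-1}\sa^{-1}=(g\circ\p)(\p')^2\,\cp+\big((g'\circ\p)(\p')^2+(g\circ\p)\p''\big)\cp\,\sa^{-1}$, the operator $\ci$ decomposes into a principal weighted composition operator with symbol $(g\circ\p)(\p')^2$ acting $\hva\rightarrow\hvb$ and a lower--order weighted composition operator with symbol $(g'\circ\p)(\p')^2+(g\circ\p)\p''$ acting $\widetilde{\ba}\rightarrow\hvb$; the extra factor $\sa^{-1}$ is an isometry and so does not affect the essential norm. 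This is precisely inequality (30), which bounds $\|\ci\|_{e}$ above by the maximum of the essential norms of the two pieces. The standing hypothesis that $\ci$ is bounded guarantees that both pieces are bounded, so the essential--norm formulas below apply.

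For the upper bound I would treat the three cases as in Theorem 3.1. When $0<\alpha<1$ one has $\ba\subset\H$, so a normal--families argument based on Lemma 2.1 shows the lower--order piece is compact and contributes $0$, exactly as in Theorem 3.1(i); Lemma 2.4 then evaluates the principal piece as $\limsup_{|\p(z)|\rightarrow1}\frac{(1-|z|^2)^\beta}{(1-|\p(z)|^2)^\alpha}|g(\p(z))(\p'(z))^2|$, which Lemma 2.5 rewrites as $(\tfrac{e}{2\alpha})^\alpha\limsup_{n\rightarrow\infty}(n+1)^\alpha\|(g\circ\p)(\p')^2\p^n\|_{\vb}$. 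When $\alpha=1$ the principal piece maps $\hv\rightarrow\hvb$ and Lemmas 2.4 and 2.5 produce the factor $(n+1)$, while the lower--order piece maps the Bloch space $\B$ into $\hvb$; its essential norm is supplied by Theorem 3.4 of \cite{ste5} and yields the factor $(\log n)$ after the logarithmic--weight asymptotics of Lemma 2.5. When $\alpha>1$ both pieces survive, the principal one again giving $(n+1)^\alpha$ and the lower--order one giving $(n+1)^{\alpha-1}$ through the $\alpha>1$ analogue of the Bloch--to--weighted essential--norm formula already invoked in Theorem 3.1(iii).

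For the lower bound I would test $\ci$ along a sequence $\{z_n\}$ with $|\p(z_n)|\rightarrow1$, using the function families already constructed in the paper, such as $f_{\p(z_n)}$ from (9), $g_{\p(z_n)}$ from (11), and the logarithmic functions of (36), (37) and (45). Each is bounded in the little Zygmund--type space and tends to $0$ uniformly on compacta, so for any compact $T:\za\rightarrow\zb$ one has $\|T f_n\|_{\zb}\rightarrow0$ and hence $\|\ci\|_{e,\za\rightarrow\zb}\gtrsim\limsup_{n\rightarrow\infty}\|\ci f_n\|_{\zb}$. Evaluating $(\ci f)''(z_n)=f''(\p(z_n))(\p'(z_n))^2g(\p(z_n))+f'(\p(z_n))\big(g'(\p(z_n))(\p'(z_n))^2+g(\p(z_n))\p''(z_n)\big)$ and using that each family makes exactly one of $f'(\p(z_n))$, $f''(\p(z_n))$ vanish while the other attains the prescribed size, one isolates in turn the principal supremum (with factor $(n+1)^\alpha$ or $(n+1)$) and the lower--order supremum (with factor $(\log n)$ or $(n+1)^{\alpha-1}$), which Lemma 2.5 converts to the stated $\|\cdot\|_{\vb}$--form.

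The main obstacle is the cross--term bookkeeping in the lower bound when $\alpha\geq1$: a test function chosen to extract the lower--order supremum also activates the principal symbol $(g\circ\p)(\p')^2$, so that contribution has to be absorbed by means of the principal lower bound proved \emph{first} --- precisely the two--stage device of Theorem 3.1(ii)--(iii) (compare the role of (38) there). Apart from this, the only real care needed is to keep track of the multiplicative constants arising from $(n+1)^\alpha\|z^n\|_{\va}\rightarrow(\tfrac{2\alpha}{e})^\alpha$ and $(\log n)\|z^n\|_{v_{\log}}\rightarrow1$ in Lemma 2.5, so that the upper and lower estimates coincide up to the relation $\asymp$; all remaining steps run parallel to Theorems 3.1 and 3.2.
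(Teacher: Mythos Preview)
Your proposal is correct and takes essentially the same approach as the paper, which simply states that Theorem 3.3 follows ``using the same methods of Theorem 3.1 and 3.2'' without giving any further details. Your outline in fact supplies more detail than the paper does; the only superfluous item is the mention of the test family (45), which belongs to the $\cj$ argument and is not needed here since the two--term structure of $(\ci f)''$ matches that of $(\ic f)''$ and the families from (9), (11), (36), (37) already suffice.
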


\begin{theorem}
Let $\varphi$ be an analytic self-map of $D$  and~$g\in H(D)$~, and ~$\jc:\za \rightarrow \zb $~is a bounded operator.

(i) ~If~$0<\alpha<1$~, then
\begin{equation}
\|\jc\|_{e,\za\rightarrow \zb}=0.
\end{equation}

(ii)~~If $\alpha=1$, then
\begin{equation}
\|\jc\|_{e,\z\rightarrow \zb}\asymp \displaystyle\limsup_{n\rightarrow\infty}(\log n)\|((g'\p'){\p}^n\|_{\vb}\}.
\end{equation}

(iii)~~If~$1<\alpha<2$, then
\begin{equation}
\|\jc\|_{e,\za\rightarrow \zb}\asymp \displaystyle\limsup_{n\rightarrow\infty}( n+1)^{\alpha-1}\|((g'\p'){\p}^n\|_{\vb}\}.
\end{equation}

(iv)~~If~$\alpha=2$, then
\begin{equation}
\|\jc\|_{e,\z^2\rightarrow \zb}\asymp \max\{\displaystyle\limsup_{n\rightarrow\infty}( n+1)\|((g'\p')){\p}^n\|_{\vb},\displaystyle\limsup_{n\rightarrow\infty}(\log  n)\|(g''){\p}^n\|_{\vb}\}.
\end{equation}

(v)~~If~$\alpha>2$, then
\begin{equation}
\|\jc\|_{e,\za\rightarrow \zb}\asymp \max\{\displaystyle\limsup_{n\rightarrow\infty}( n+1)^{\alpha-1}\|((g'\p')){\p}^n\|_{\vb},\displaystyle\limsup_{n\rightarrow\infty}( n+1)^{\alpha-2}\|(g''){\p}^n\|_{\vb}\}.
\end{equation}

  \end{theorem}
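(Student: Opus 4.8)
The plan is to mirror the treatment of $\cj$ in Theorem 3.2, since $\jc$ has exactly the same two-term structure. First I would use Lemma 3.1 to replace $\|\jc\|_{e,\za\rightarrow\zb}$ by $\|\jc\|_{e,\widetilde{\za}\rightarrow\zb}$, and then the isometric reduction (32),
$$\|\jc\|_{e,\widetilde{\za}\rightarrow\zb}\leq \|(g'\p')\cp\|_{e,\widetilde{\ba}\rightarrow\hvb}+\|g''\cp\|_{e,\widetilde{\za}\rightarrow\hvb}.$$
The first summand is a weighted composition operator with symbol $g'\p'$ on the Bloch-type space $\ba$ (it comes from the term $f'(\p(z))\p'(z)g'(z)$ in $(\jc f)''$), and the second has symbol $g''$ on $\za$ (from the term $f(\p(z))g''(z)$). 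Thus the whole argument reduces to estimating these two essential norms and then matching them with test-function lower bounds, precisely as in Theorems 3.1 and 3.2, but now with the symbols $g'\p'$ and $g''$.

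For the upper bounds I would dispatch the second summand with Lemma 2.3: $g''\cp:\za\rightarrow\hvb$ is compact for $0<\alpha<2$, so it contributes nothing in cases (i)--(iii), whereas for $\alpha=2$ and $\alpha>2$ its essential norm is comparable to $\limsup_n(\log n)\|g''\p^n\|_{\vb}$ and $\limsup_n(n+1)^{\alpha-2}\|g''\p^n\|_{\vb}$ respectively. The first summand I treat as in Theorem 3.1(i)--(ii): for $0<\alpha<1$ the embedding $\ba\subset\H$ and the compactness argument of Theorem 3.1(i) give $\|(g'\p')\cp\|_{e,\ba\rightarrow\hvb}=0$ (hence $\|\jc\|_e=0$); for $\alpha=1$, the essential-norm formula for $\uc:\B\rightarrow\hvb$ from \cite{ste5} yields a quantity comparable to $\limsup_n(\log n)\|(g'\p')\p^n\|_{\vb}$; and for $\alpha>1$ the pointwise growth $|f(z)|\leq C(1-|z|^2)^{-(\alpha-1)}\|f\|_{\ba}$ realises $\ba$ inside $H_{v_{\alpha-1}}^\infty$, so Lemma 2.5 applies and gives essential norm comparable to $\limsup_n(n+1)^{\alpha-1}\|(g'\p')\p^n\|_{\vb}$. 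Passing between the pointwise suprema and the $\p^n$-sequence form with Lemma 2.4 and adding the two contributions in each regime produces the upper estimates (50)--(54).

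For the lower bounds I would, in each regime, fix $\{z_n\}\subseteq D$ with $|\p(z_n)|>\frac{1}{2}$ and $|\p(z_n)|\rightarrow1$, and insert into $\jc$ the same bounded, compactly-null test sequences already used in Theorems 3.1 and 3.2: the functions $f_{\p(z_n)}$ of (9) for $\alpha>1$, the logarithmic functions from \cite{li3} used in the proofs of Theorem 3.1(ii) and Theorem 3.2(ii) for $\alpha=1$, and the functions $O_n$ (from \cite{km}) and $t_n$ of Theorem 3.2(iv)--(v) to capture the $g''$ term when $\alpha=2$ and $\alpha>2$. Writing $(\jc f_n)''=f_n'(\p)\p'g'+f_n(\p)g''$, I would keep the term I want to bound below and control the companion term by the estimate already established for it (the same absorption used through (38) in Theorem 3.1(iii)); since $\|Tf_n\|_{\zb}\rightarrow0$ for every compact $T:\za\rightarrow\zb$, this yields $\|\jc\|_{e}\geq\limsup_n\|\jc f_n\|_{\zb}$ dominating the required $\limsup$, and the two contributions combine into the $\max$ appearing in (53)--(54).

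The hard part will be the $\alpha>1$ estimate of the principal Bloch-type term $(g'\p')\cp:\ba\rightarrow\hvb$, since Lemma 2.3 is stated only for Zygmund-type domains; I expect to secure it through the embedding of $\ba$ into $H_{v_{\alpha-1}}^\infty$ (with the critical logarithmic weight $H_{v_{\log}}^\infty$ at $\alpha=1$) together with Lemma 2.5, the matching lower bound being supplied by the extremal functions (9). The second delicate point is the bookkeeping in the lower bounds: one must check that for each chosen test sequence exactly one of the two summands of $(\jc f_n)''$ carries the targeted blow-up while the other is absorbed by the companion estimate, so that no cross term spoils the separation; this is exactly where the precise normalisations of the functions from \cite{li3, km, ysl1} are needed.
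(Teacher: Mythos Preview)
Your proposal is correct and is exactly the route the paper intends: the paper gives no separate proof of this theorem but simply states that ``using the same methods of Theorem 3.1 and 3.2'' yields the result, and your plan unpacks precisely those methods with the symbols $g'\p'$ and $g''$ in place of $(g'\circ\p)(\p')^2$ and $(g''\circ\p)(\p')^2+(g'\circ\p)\p''$. The only minor deviation is in your ``hard part'': for the essential norm of $(g'\p')\cp:\ba\to\hvb$ when $\alpha>1$ the paper (in the analogous step of Theorem~3.2(iv)) quotes Theorem~3.2 of \cite{ste6} directly rather than passing through an embedding $\ba\hookrightarrow H_{v_{\alpha-1}}^\infty$, so you can simply cite that result (and \cite{ste5} for $\alpha=1$) and avoid the extra verification.
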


\section*{Conflict of Interests}

The authors declare that they have no conflict of interest.

\end{document}